\newenvironment{pf}{\begin{proof}}{\end{proof}}
\newtheorem{Th}{Theorem}[section] 
\newtheorem*{Thm}{Theorem}
\newtheorem{Lem}[Th]{Lemma}
\newtheorem{Cor}[Th]{Corollary} 
\newtheorem{Prop}[Th]{Proposition}
\newtheorem{Def}[Th]{Definition}
\newtheorem{Rem}[Th]{Remark}
\def\cal{\mathcal}
\newcommand{\cA}{\mathcal{A}} 
\def\cC{\mathcal{C}} 
\def\cD{\mathcal D }
\def\cE{\mathcal E} 
\def\cF{\mathcal F}
\def\cG{\mathcal G}
\newcommand{\cH}{\mathcal{H}}
\newcommand{\cI}{\mathcal{I}}
\newcommand{\cK}{\mathcal{K}}
\newcommand{\cL}{\mathcal{L}}
\newcommand{\cM}{\mathcal{M}}
\newcommand{\cN}{\mathcal{N}}
\newcommand{\cO}{\mathcal{O}}
\newcommand{\cP}{\mathcal{P}}
\def\cT{\mathcal T }
\def\cM{\mathcal M } 
\def\cI{\mathcal I }
\newcommand{\N}{\mathbb{N}}
\renewcommand{\P}{\mathbb{P}} 
\newcommand{\Q}{\mathbb{Q}}
\newcommand{\R}{\mathbb{R}}
\newcommand{\Z}{\mathbb{Z}}
\def\ch{\mbox{ch}}
\def\id{\mbox{id}} 
\def\Im{\mbox{Im}}
\def\Ext{\mbox{Ext}}
\def\Hom{\mbox{Hom}} 
\def\Ker{\mbox{Ker}} 
\def\Pic{\mbox{Pic}} 
\def\Quot{\mbox{Quot}}
\def\rank{\mbox{rank}}
\def\Sing{\mbox{Sing}}
\def\Supp{\mbox{Supp}}
\def\Tors{\mbox{Tors}}
\def\Td{\mbox{Td}}
\def\vol{\mbox{vol}}
\title{Bounded sets of sheaves on K\"ahler manifolds}
\author{Matei Toma} \date{\today}
\thanks{ AMS
  Classification (2000): 32J99; secondary: 14C05.}
\begin{document}
\maketitle

\begin{abstract}
We show that any set of quotients with fixed Chern classes  of a given coherent sheaf on a compact K\"ahler manifold is bounded in a sense which we define. The result is proved by adapting Grothendieck's boundedness criterion expressed via the Hilbert polynomial to the K\"ahler set-up. As a consequence we obtain the compactness of the connected components of the Douady space of a compact K\"ahler manifold. 
\end{abstract}

\noindent

\section{Introduction}

Let $(X, \cO_X(1))$ be a projective scheme endowed with a very ample line bundle. 
In \cite{Gro61} Grothendieck constructed the Hilbert scheme of $X$ which 
 parametrizes the closed subschemes of $X$. He also showed that
by fixing the Hilbert polynomial of the closed subschemes which are to be parametrized, one gets a projective subscheme of the Hilbert scheme. In particular the connected components of the Hilbert scheme are projective. One of the ingredients of Grothendieck's proof was the introduction of the notion of boundedness for sets of coherent sheaves together with a boundedness criterion which says that such a set is bounded if and only if all its elements are quotients of a unique coherent sheaf and their Hilbert polynomials range through a finite set, \cite{Gro61} Thm. 2.1. This new notion proved to be useful in other contexts as well, such as the construction of moduli spaces of semistable sheaves, cf. \cite{HL}.

In this paper we introduce boundedness for sets of coherent sheaves over complex analytic spaces and prove the following criterion in the K\"ahler case; see sections \ref{sectionchern}, \ref{sectionboundedness} for the definitions.
\begin{Thm} %\label{principala}
Let $(X,\omega)$ be a K\"ahler complex manifold and $F$  a set of isomorphism classes of coherent sheaves on  $X$. 
 Then the set $F$ is bounded if the following conditions are fulfilled:
\begin{enumerate}
\item There exists a bounded set $G$ of isomorphism classes of coherent sheaves on  $X$
 such that each element of $F$ is a quotient of an element of $G$.
 \item The supports of the sheaves of $F$ are contained in some compact subset of $X$. 
\item The degrees of the sheaves of $F$ are bounded from above. 
\end{enumerate}
\end{Thm}

For the proof we basically follow Grothendieck's approach but several changes and new arguments are needed; one important technical tool in the projective case for instance, for which we could find no analogue in the K\"ahler case, is the use of linear projections. The main idea is to reduce the problem to bounding the volume of some appropriate analytic cycles and use the compactness of the associated cycle space provided by Bishop's theorem, cf. \cite{Fuj78}, \cite{Lie77}. 
 
Our main application is a compactness result for the connected components of the Douady space of a compact K\"ahler manifold, section \ref{sectioncor}. To put this into perspective recall that the set of subspaces of a compact analytic space $X$
%Let now $X$ be a compact analytic space. The set of subpaces of $X$ 
was endowed with a natural analytic structure by Douady in \cite{Dou66}. Fujiki proved that the topology of this space had a countable basis, \cite{Fuj79}. He further showed that its irreducible components were compact when $X$ was a K\"ahler space and more generally when $X$ belonged to the class $\cC$, \cite{Fuj84}. For the connected components of the space of divisors of a normal compact space $X$ compactness and even projectivity were established by Fujiki in \cite{Fuj82}.
%Here we show that if $X$ is a compact K\"ahler manifold, then  the connected components of the Douady space of $X$ are compact as well. See the last section for more precise statements.

Other applications are properness for the morphism ``Douady $\to$ Barlet'' and  compactness for the moduli space of torsion free rank one sheaves on compact K\"ahler manifolds. We believe that our criterion may be further used to prove compactness of  yet to be constructed moduli spaces of higher rank semistable sheaves over compact  K\"ahler manifolds.

We start by reviewing some properties of the homology Todd class which will serve as a replacement of the Hilbert polynomial. Using semi-analytic Stein compacta  in the next section boundedness for sets of isomorphism classes of coherent sheaves is introduced in the analytic set-up and some basic properties are proven. A boundedness result for reflexive sheaves of rank $1$ on compact K\"ahler normal spaces is also included. We prove the boundedness criterion in section  \ref{main} and we end with applications.% the compactness result for the connected components of the Douady space of a compact K\"ahler manifold and a properness result for the morphism ``Douady $\to$ Barlet''.

{\em Acknowledgements:} I wish to thank Daniel Barlet and Julien Grivaux for several discussions and particularly Jon Magnusson for arousing my interest in these topics.  Thanks are also due to the referees whose suggestions helped in improving the presentation.

\section{Homology classes and degrees}\label{sectionchern}

The Grothendieck-Riemann-Roch theorem for singular varieties was proved by Baum, Fulton and MacPherson \cite{BFM75}, \cite{BFM79}
in the projective case and by Levy \cite{Lev87} in the complex analytic case. 
One way to formulate it is that there exists a natural transformation of functors $\tau:K_0\to H_{2*}( \ ;\Q)$ such that for any compact complex space $X$ the diagram 
$$
  \xymatrix{K^0X\otimes K_0X \ar[r]^{\otimes}\ar[d]^{\ch\otimes\tau} 
& K_0X \ar[d] ^{\tau}\\
H^{2*}(X;\Q)\otimes H_{2*}(X;\Q) \ar[r]^{ \ \ \ \ \ \ \frown} & H_{2*}(X;\Q)
 }
  $$ 
commutes and if $X$ is nonsingular then $\tau(\cO_X)=\Td(X)\frown[X]$, where
$K^0X$, $K_0X$ are the Grothendieck groups generated by holomorphic vector bundles and coherent sheaves respectively and $\Td(X)$ is the (cohomology) Todd class of the tangent bundle to $X$. Naturality means that for each proper morphism
$f:X\to Y$ of complex spaces the diagram
$$
  \xymatrix{ K_0X \ar[r]^{\tau}\ar[d]^{f_!} 
& H_{2*}(X;\Q) \ar[d] ^{f_*}\\
K_0Y\ar[r]^{\tau} & H_{2*}(Y;\Q)
 }
  $$ 
commutes, where $f_!$ is defined by $f_!([\cF])=\sum_i(-1)^i [R^if_*(\cF)]$ for any coherent sheaf $\cF$ on $X$. (In the non-compact case $\tau$ takes values in the Borel-Moore homology, \cite{I}, \cite{Ful} 19.1.) We refer to the original papers and to the books \cite{Ful}, \cite{FL}, \cite{DV} for a thorough treatment of these facts.

For a coherent sheaf $\cF$ on a compact complex space $X$ we shall call $\tau(\cF):=\tau([\cF])$ the {\em homology Todd class} of $\cF$. 
We list some of its properties :
\begin{enumerate}
\item If $\cF$ is locally free and $X$ is smooth and connected, $\tau(\cF)$ is the Poincar\'e dual of $ch(\cF)\cdot\Td(X)\in H^{2*}(X;\Q)$.
\item If $f:X\to Y$ is an embedding then $\tau(f_*\cF)=f_*(\tau(\cF))$.
\item $\tau(\cF)_r=0$ for $r>\dim \Supp\cF$.
 \item $\tau$ is additive on exact sequences.
\item If $X$ is irreducible then $\tau(\cF)_{\dim X}=\rank(\cF)[X]\in H_{2\dim X}(X;\Q) $.

(This may be deduced from the previous properties via some modification of $X$ which desingularizes $X$ and flattens $\cF$.) 
\item  The component of  $\tau(\cF)$ in degree $\dim \Supp\cF$ is the homology class of an effective analytic cycle.

(In order to see this one makes the following reduction steps. For any integer  $r$ we set $\cN_r(\cF)$ to be the sheaf of sections of $\cF$ whose support have dimension less than $r$ and $\cF_{(r)}:=\cF/\cN_r(\cF)$. If $r=\dim \Supp\cF$ we may assume that $\cF=\cF_{(r)}$. Let now $\cI$ be the reduced ideal of $\Supp\cF$. We may also work with the graduation of 
$0=\cI^k\cF \subset \cI^{k-1}\cF\subset ...\subset \cI\cF\subset\cF$ instead of $\cF$ and in particular we may suppose that $\cI\cF=0$. If $Y_1, ..., Y_n$ are the irreducible  components of $\Supp\cF$ and $\iota_j:Y_j\to X$ their respective embeddings, we consider $\cF_j:=\Im(\cF\to\iota_{j*}\iota_{j}^*\cF)$. The assumption $\cF=\cF_{(r)}$ now implies that $\cF$
embeds into $\oplus_j\cF_j$ and $\dim\Supp((\oplus_j\cF_j)/\cF)<r$ thus reducing the situation to the case of an irreducible support.)
\end{enumerate}

In the complex analytic setting a definition of the homology Todd class may be given using  real-analytic locally free resolutions; see  also \cite{AtHi61} for the analogous construction of the cohomological {\em Grothendieck element}
$\gamma_X(\cF)$.  Using this approach one sees easily that property (1) above holds also for 
arbitrary coherent sheaves when $X$ is smooth and connected. This allows one to prove the following result on the variation of the homology Todd classes in a flat family.

\begin{Prop} \label{plat}
 Let $X$ and $S$ be complex manifolds with $X$ compact and $S$ connected and $\cF$ be a coherent sheaf on $X\times S $ flat over $S$.
Then  the class $\tau(\cF_s)\in  H_{2*}(X;\Q)$ is independent of $s\in S$.
\end{Prop}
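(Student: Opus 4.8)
The plan is to reduce the statement, via the extension of property (1) to arbitrary coherent sheaves on a complex manifold, to the assertion that the cohomological Chern character $\ch(\cF_s)\in H^{2*}(X;\Q)$ is independent of $s$. Since $X$ is smooth (and we may assume it connected, treating each component separately), property (1) gives that $\tau(\cF_s)$ is the Poincar\'e dual of $\ch(\cF_s)\cdot\Td(X)$; as $\Td(X)$ and the duality isomorphism do not depend on $s$, it suffices to control $\ch(\cF_s)$.

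First I would recall that on the complex manifold $X\times S$ the coherent sheaf $\cF$ has a well-defined Chern character $\ch(\cF)\in H^{2*}(X\times S;\Q)$, obtained from the Grothendieck element $\gamma_{X\times S}(\cF)$ through real-analytic locally free resolutions. Denote by $i_s:X\cong X\times\{s\}\hookrightarrow X\times S$ the inclusion of the fibre. The central point is the identity $\ch(\cF_s)=i_s^*\ch(\cF)$. This is where flatness enters: the submanifold $X\times\{s\}$ is cut out regularly by the pullbacks of local coordinates on $S$ vanishing at $s$, and flatness of $\cF$ over $S$ forces $\Tor_j^{\cO_{X\times S}}(\cF,\cO_{X\times\{s\}})=0$ for $j>0$, so that $i_s^![\cF]=[\cF_s]$ in $K_0(X)$ and the restricted resolution stays exact. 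Naturality of the Chern character under pullback then yields $\ch(\cF_s)=i_s^*\ch(\cF)$.

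Finally I would invoke homotopy invariance. Being a connected manifold, $S$ is path-connected, so for any two parameters $s_0,s_1$ the inclusions $i_{s_0}$ and $i_{s_1}$ are homotopic, whence they induce the same map $H^{2*}(X\times S;\Q)\to H^{2*}(X;\Q)$. Therefore $\ch(\cF_{s_0})=i_{s_0}^*\ch(\cF)=i_{s_1}^*\ch(\cF)=\ch(\cF_{s_1})$, which gives the desired independence and proves the proposition.

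I expect the main obstacle to be the careful justification of $\ch(\cF_s)=i_s^*\ch(\cF)$ within the analytic framework: one must ensure that a real-analytic locally free resolution of $\cF$ on $X\times S$ restricts to a resolution of $\cF_s$ on the fibre, which is precisely where the holomorphic flatness hypothesis is used to kill the higher $\Tor$ sheaves. A secondary technical point is the possible non-compactness of $X\times S$; since the Chern character takes values in ordinary (rather than Borel--Moore) cohomology and since local constancy in $s$ already suffices, I would reduce to a relatively compact coordinate polydisk in $S$ on which the resolution and the homotopy are available.
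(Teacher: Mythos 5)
Your proposal is correct and takes essentially the same approach the paper intends: the paper only sketches the proof (in the paragraph preceding the Proposition), namely to extend property (1) to arbitrary coherent sheaves on a smooth connected $X$ via real-analytic locally free resolutions \`a la Atiyah--Hirzebruch, and then deduce constancy of $\tau(\cF_s)$ from constancy of $\ch(\cF_s)$. Your write-up fills in exactly this argument, with the correct identification of where flatness is used (vanishing of higher $\Tor$'s so that the resolution restricts to a resolution on each fibre) and the homotopy invariance step handling connectedness of $S$.
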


Let $(X,\omega)$ be a K\"ahler compact analytic space. The class $[\omega]\in H^2(X,\R)$ allows us to define 
{\em degrees} 
of a coherent sheaf $\cF$ on $X$ in the following way. For each $r\in\N$ we define the $r$-degree of $\cF$ by
$$
\deg_r(\cF):=[\omega^r]\frown\tau_{r}(\cF)\in \R.
$$
Notice that in case $(X, \cO_X(1))$ is polarized, smooth, projective and $[\omega]=c_1(\cO_X(1))$ one recovers the coefficient of the Hilbert polynomial of $\cF$ in degree $r$ as 
$$
\frac{\deg_r(\cF)}{r!}.
$$

%%%%%%%%%%%%%%%%%%%%%%%%%%%%%%%%%%%%%%%%%%%%%%%%%%%%%%%%%%%%%%%%%%%%%%%%%%%%%%%%%%%%%%%%%%%%%%%%%%%%%%%%%%%%%%%%%%
\section{Bounded sets of coherent sheaves}\label{sectionboundedness}

According to Grothendieck's definition a family of coherent sheaves on a scheme $X$ (over a field $k$ for instance) is {\em bounded} if it can be parametrized by a scheme of finite type over $k$. One feature of this context is that the topology of the parameter space is  noetherian, i.e. every descending chain of closed subspaces is stationary. In the complex analytic setting a possible substitute would be to consider compact parameter spaces and work with their Zariski topology. We chose to work with semi-analytic Stein compacta, which are better adapted to our purposes; see 
\cite{BS} 5.1.f for the definition. Their Zariski topology is known to be noetherian, cf. 
\cite{BS} 5.3, p.220.

\begin{Def}\label{def:bounded}
Let $X$ be an analytic space %proper  
over an analytic space $S$ and $E$ a set of isomorphism classes of coherent sheaves on the fibers $X_s$ of $X\to S$. We say that the set $E$ is {\em bounded} if 
a semi-analytic Stein compactum $K$ over $S$ exists together with a coherent sheaf  $\cG$ in a neighbourhood of $X\times_S K$
such that $E$ is contained in the set of isomorphism classes defined by the analytic restrictions of the sheaf $\cG$ to 
the fibers of $X\times_S K\to K$. A set of complex subspaces of the fibers of $X\to S$ is called  {\em bounded} if the set  of isomorphism classes of their structure sheaves is bounded.
\end{Def}
(We will be loose on the above terminology and often say ``sheaves'' instead of ``isomorphism classes of sheaves''.) 

In the above definition it is clear that $E$ can be viewed as set of sheaves defined on (some of) the fibers of 
$X\times_S K\to K$.

Let $X$ be an analytic space over $S$ and $\cF$ a coherent sheaf on $X$. If $T \to S$ is a morphism, we will write as usual $X_T:= X\times_S T$ and $\cF_T$ for the base change. The projections $X_T\to T$ will be denoted by $p_T$.

When speaking of morphisms defined on Stein compacta $K$ or sheaves over $K$ we mean of course that such objects are defined on some analytic space containing $K$.

\begin{Rem}\label{noether} 
Suppose $X$ is an analytic space %proper 
over a reduced analytic space $S$ and $\cF$ a coherent sheaf on $X$
whose support is proper over $S$.
Then there exists a nowhere dense closed analytic subspace $T$ of $S$
such that over $S\setminus T$ the sheaf
 $\cF_{S\setminus T}$ is flat and base change holds for the sheaves $R^q{p_{S\setminus T}}_*(\cF)$ for all $q$. In this case we will simply say that base change holds for $\cF$ over $S\setminus T$.
\end{Rem}

Indeed, by Frisch's theorem on generic flatness, there exists a nowhere dense analytic subset $T'\subset S$ such that 
the sheaf  $\cF$ is flat on $S\setminus T'$. 
We apply Grauert's semicontinuity theorem and get a  nowhere dense  analytic subset $T''$ of the  Zariski open subset  $S\setminus T'$ of $S$ such that the sheaves $R^q{p_{S\setminus T'\setminus T''}}_*(\cF)$ are locally free and base change holds over $S\setminus T'\setminus T''$. Now by Hironaka's flattening theorem,  \cite{Hir75}, after some proper modification $S'\to S$ not affecting $S\setminus T'$ the sheaf $\cF_{S'}$ modulo $S'$-torsion becomes flat over $S'$. This implies that 
$T:=T'\cup T''$ is a closed analytic subspace of $S$. 

In fact a proof of this remark may be given using an older result of Kiehl-Verdier and Schneider, see \cite{BS} 3.4.1, and avoiding Hironaka's flattening theorem.

\begin{Rem}\label{echivalenta} 
Our definition is equivalent to Grothendieck's via GAGA when $X$ is complex projective.
\end{Rem}

Indeed, if a set of algebraic coherent sheaves on $X$ is bounded, then by Grothendieck's criterion, these sheaves appear as quotients of the one and the same sheaf $\cO(-n)^N$ and their Hilbert polynomials range in a finite set. Thus they will give points in a finite set of projective $\Quot$ schemes, hence the boundedness of the associated set of coherent analytic sheaves. 

Conversely, if a set of  coherent analytic sheaves on $X$ is bounded in our sense, then these sheaves will be fibers of some flat family over a semi-analytic Stein compactum $K$. Then by the same argument as before they will appear as quotients of a uniform sheaf of the type  $\cO(-n)^N$. As their Hilbert polynomials range in a finite set again, the existence and projectivity of the $\Quot$ scheme may be used to deduce the boundedness of the set of associated algebraic coherent sheaves.

The following proposition gathers some basic properties of bounded sets of coherent sheaves, which will be needed in the sequel.

\begin{Prop}\label{limitata} 
Let $X$ be an analytic space %proper
 over an analytic space $S$ and $E$, $E'$ two bounded sets of isomorphism classes of sheaves on the fibers of $X\to S$. 
 Suppose that there exists a closed subset $L$ of $X$ proper over $S$ such that the supports of the sheaves from $E$ or from $E'$ are contained in $L$.
 Then the following sets are bounded as well:
\begin{enumerate}
 \item The sets of kernels, cokernels and images of sheaf homomorphisms $\cF\to \cF'$, when the isomorphism
 classes of $\cF$ and $\cF'$ belong to $E$ and $E'$ respectively.
\item The set of isomorphism classes of extensions of $\cF$ by $\cF'$, for
  $\cF$ and $\cF'$ as above.
\item The set of isomorphism classes of tensor products $\cF\otimes\cF'$.
\end{enumerate}
\end{Prop}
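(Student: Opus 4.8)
The plan is to build, for each of the three operations, a single coherent sheaf on a family $X\times_S K''\to K''$ over a semi-analytic Stein compactum $K''$ whose fibres realise all the sheaves in question, and to control base change by a Noetherian induction. First I would replace the two bounding data by a common one. By hypothesis there are semi-analytic Stein compacta $K,K'$ over $S$ and coherent sheaves $\cG,\cG'$ near $X\times_S K$ and $X\times_S K'$ representing $E$ and $E'$. Since a homomorphism $\cF\to\cF'$, an extension, or a tensor product only involves sheaves on the \emph{same} fibre $X_s$, I form the fibre product $L:=K\times_S K'$, which is again a semi-analytic Stein compactum over $S$, and pull $\cG,\cG'$ back to coherent sheaves near $X\times_S L$, keeping the names $\cG,\cG'$. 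Then every admissible pair $(\cF,\cF')$ equals $(\cG_l,\cG'_l)$ for some $l\in L$. I shall use freely that a finite disjoint union of bounded sets is bounded and that a subset of a bounded set is bounded. Part (3) is now immediate: the coherent sheaf $\cG\otimes\cG'$ on $X\times_S L$ satisfies $(\cG\otimes\cG')_l=\cG_l\otimes\cG'_l$ because tensor product commutes with the base change $-\otimes k(l)$, so $L$ together with $\cG\otimes\cG'$ bounds the set of tensor products, with no flatness needed.

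For (1) and (2) I would parametrise the homomorphisms, respectively the extension classes. Writing $p\colon X\times_S L\to L$ for the proper projection, the relative sheaves $\mathcal{H}om_{p}(\cG,\cG')$ and $\mathcal{E}xt^{1}_{p}(\cG,\cG')$ are coherent on $L$. By Remark \ref{noether} there is a dense Zariski-open $L_0\subseteq L$ over which these are locally free and cohomology commutes with base change, so that they compute $\Hom(\cG_l,\cG'_l)$ and $\Ext^1(\cG_l,\cG'_l)$ fibrewise. I then pass to the associated analytic linear space $\mathbb{V}$ over $L$ carrying the universal homomorphism $\Phi\colon\tilde{\cG}\to\tilde{\cG'}$, respectively the universal extension $0\to\tilde{\cG'}\to\cE\to\tilde{\cG}\to 0$, on the pullback of the family to $\mathbb{V}$. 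Over $L_0$ the fibre of $\mathbb{V}$ at $l$ is $\Hom(\cG_l,\cG'_l)$ (resp. $\Ext^1(\cG_l,\cG'_l)$) and $\Phi$ (resp. $\cE$) restricts on $X_t$ to the homomorphism (resp. extension) labelled by $t$. The coherent sheaves $\Ker\Phi,\Coker\Phi,\Im\Phi$ and the middle term $\cE$ are the candidate universal objects.

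The remaining issue is to turn the non-compact $\mathbb{V}$ into an admissible parameter and to handle the bad locus. Embedding $\mathbb{V}$ as a closed linear subspace of $\C^N\times U$ over a Stein neighbourhood $U\supseteq L$, I take $B:=\mathbb{V}\cap(\overline{\mathbb{B}^N}\times L)$, the closed unit–ball bundle; this is a compact semi-analytic subset admitting a basis of Stein neighbourhoods, hence a semi-analytic Stein compactum over $S$. Since $\Ker\Phi,\Coker\Phi,\Im\Phi$ are unchanged when $\Phi$ is scaled by a nonzero constant (and the origin $0\in B$ accounts for $\Phi=0$, giving $\cF,0,\cF'$), while the middle term of an extension is constant along the rays of $\Ext^1$, every ray of a fibre of $\mathbb{V}$ meets $B$; thus the restrictions of $\Ker\Phi,\Coker\Phi,\Im\Phi$ (resp. of $\cE$) to $X\times_S B$ realise, over each $l\in L_0$, all kernels, cokernels and images (resp. all middle terms) attached to that fibre. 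This gives boundedness over $L_0$. Over the nowhere-dense closed analytic complement $L\setminus L_0$ I repeat the whole construction with $L$ replaced by $L\setminus L_0$; because the Zariski topology of the Stein compactum $L$ is Noetherian, the descending chain terminates after finitely many steps, and the disjoint union of the finitely many ball bundles $B$ bounds the entire set.

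The main obstacle is precisely the tension between base change and compactness. The universal constructions are clean only where the relative $\Ext$-sheaves are locally free and satisfy base change, whereas boundedness demands a genuine semi-analytic Stein compactum as parameter; reconciling the two forces both the ball-bundle device, which compactifies the fibres of $\mathbb{V}$ while exploiting the scaling invariance of kernels, images and extension classes, and the Noetherian induction on the closed bad loci furnished by Remark \ref{noether}. Checking that $\Ker\Phi,\Coker\Phi,\Im\Phi$ and $\cE$ are flat with the expected fibres over the good locus, so that restriction to $X_t$ really returns the sheaf labelled by $t$, is the one step where a careful, if routine, use of generic flatness and semicontinuity is needed.
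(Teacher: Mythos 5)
Your overall architecture -- a universal parametrization of homomorphisms (resp.\ extension classes), a compactification of the parameter space exploiting scaling invariance, and noetherian induction over the bad locus -- parallels the paper's proof, which uses the projectivization $T=\P(((p_{K})_*\cH om(\cG,\cG'))^{\vee})$ where you use a ball bundle; your part (3) is identical to the paper's. But there is a genuine gap at the decisive step. Your universal homomorphism $\Phi$ (resp.\ universal extension with middle term $\cE$) is only defined, and only has the correct fibres, over the good locus $L_0$ where the relative $\cH om$/$\cE xt$ sheaves are locally free and base change holds. Boundedness, however, demands a coherent sheaf on a \emph{neighbourhood} of $X\times_S B$ for a compact parameter $B$, and any compactum containing the parameter points you actually need -- the points of $\mathbb{V}$ lying over $L_0$ -- necessarily contains their limit points lying over $L\setminus L_0$. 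Coherent sheaves do not extend across analytic subsets for free, so ``the restriction of $\Ker\Phi$, $\Coker\Phi$, $\Im\Phi$, $\cE$ to $X\times_S B$'' is simply not defined near those points. Your noetherian induction does not repair this: replacing $L$ by $L\setminus L_0$ only accounts for the pairs $(\cG_l,\cG'_l)$ with $l$ in the bad locus; it does not supply the missing extension of the universal kernel/cokernel/extension sheaves across the bad part of $B$ itself.

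This is exactly where the paper's proof does its real work. There the universal object is a section $\sigma$ of the double dual $((p_T)_*\cH om(\cG_T,\cG'_T))^{\vee\vee}$ over the whole proper parameter space $T$, which is \emph{not} an honest homomorphism; the paper multiplies it by generators $f_i$ of $\cA nn(\cC)$ and $g_j$ of $\cA nn(\cT)$ (where $\cC$, $\cT$ are the cokernel and kernel of the natural map to the double dual) to obtain genuine homomorphisms $g_jf_i\sigma$ on Stein neighbourhoods, and then takes $\cK:=\cap_{f_i,g_j}\Ker(g_jf_i\sigma)$ (resp.\ $\cG'_U/\sum_{i,j}\Im(g_jf_i\sigma)$ for cokernels, resp.\ the sheaves $\cE_{g_jf_i}$ built from an injective resolution and summed, for extensions) as a coherent sheaf defined on all of $X_T$ which restricts to the correct objects over the good locus. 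Some such device is indispensable; an alternative would be representability of the Hom functor by a linear space (\cite{Fle81} 3.2), but that requires flatness of $\cG'$ and so reproduces the same problem along the non-flat locus. Treating this as a ``routine use of generic flatness and semicontinuity,'' as your closing paragraph does, leaves the central difficulty of the proposition unaddressed. (For part (2) there is the further point, which the paper also handles explicitly: a fibrewise family of extension classes is realized by actual sheaves on the total space only after invoking the vanishing $H^2(U;(p_U)_*\cH om(\cG,\cG'))=0$ on Stein neighbourhoods, via the spectral sequence $H^p(\cE xt^q(p_U;\cG,\cG'))\Rightarrow\Ext^{p+q}(X_U;\cG,\cG')$.)
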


\begin{proof}
It is clear that one can find a  base extension $K\to S$ from a semi-analytic Stein compactum together with coherent sheaves $\cG$ and $\cG'$ on $X_{K}:=X\times_S K$ such that $E$ and $E'$ are contained in the set of isomorphism classes defined by the sheaves $\cG$  and $\cG'$ on the fibers of $X_K\to K$. We may assume $K$ smooth and connected. Moreover the support of $\cG$ or the support of $\cG'$ is proper over $K$.
\begin{enumerate}
 \item 
 By Remark \ref{noether} there exists an analytic subset $K_1\subset K$ such that 
 the sheaves $\cO_{X_{K}}$, $\cG$, $\cG'$, and  $\cH om(\cG,\cG')$
are flat over $K\setminus K_1$ and base change holds for these sheaves over $K\setminus K_1$.  Working by descending induction on 
$\dim K$  we only need to check boundedness for the family of possible kernels and cokernels over $K\setminus K_1$. 

 Let $T:=\P (((p_{K})_*\cH om(\cG,\cG'))^{\vee})$ and 
$((p_{K})_*\cH om(\cG,\cG'))^{\vee})_T\to \cL$ be the universal quotient. Here and throughout the paper we use the notation $\P(\cE)$ for the projective bundle associated to the sheaf $\cE$ in the sense of Grothendieck. 

The semi-analytic compactum $T$ is not Stein in general but we may cover it by a finite number of semi-analytic Stein compacta.

 We get a universal section 
$$\cO_T\to (((p_{K})_*\cH om(\cG,\cG'))^{\vee})_T^{\vee}\otimes \cL$$
which restricted over the part of $T$ lying over $K\setminus K_1$ induces a universal family of morphisms $\cG_{X_s}\to \cG'_{X_s}$. We look for a coherent sheaf $\cK$ on $X_T$ which extends to $T$ the family of kernels of these morphisms. We may therefore reduce ourselves to the situation of a proper morphism $T\to K$ together with a section $\sigma$ of 
$((p_T)_*\cH om(\cG_T,\cG'_T))^{\vee\vee}$.
Let $\cT$ and $\cC$ be the kernel and the cokernel of the natural morphism  
$$(p_T)_*\cH om(\cG_T,\cG'_T)\to((p_T)_*\cH om(\cG_T,\cG'_T))^{\vee\vee}$$
of coherent sheaves on  $T$. Let further $U$ be a Stein open neighbourhood of some point $t\in T$ and $f$ any  function in 
$\cA nn(\cC)(U)$. Then $f\sigma$ has a lift  to $(p_T)_* \cH om(\cG_T,\cG'_T)(U)=\cH om (\cG_T,\cG'_T)(X_U)$ which we denote by $f\sigma$ for simplicity.  Let now $f_i$ and $g_j$ run through two finite sets of generators of $\cA nn(\cC)_t$ and  $\cA nn(\cT)_t$ respectively and suppose that they still generate these sheaves at any point of $U$. On $X_U$ we define
$$\cK:=\cap_{f_i, g_j}\Ker(g_jf_i\sigma:\cG_U\to\cG'_U).$$
It is easy to check that the definition does not depend on the chosen generators, hence the existence of the desired sheaf
$\cK$ on the compact space $X_T$. 
For a global substitute of a cokernel one may consider
$$\cG'_U/\sum_{i,j} \Im (g_jf_i\sigma:\cG_U\to\cG'_U).$$
 Then the boundedness holds over the open set of $T_{K\setminus K_1}$  where the cokernel is flat 
and we continue by noetherian induction on $K$.

\item
When $\cO_{X_{K}}$, $\cG$, $\cG'$ are flat over $K$ and $\cE xt^1(p_{K};\cG,\cG')$ commutes with base change,
families of extensions of $\cG$ by $\cG'$ are parametrized by $H^0(K; \cE xt^1(p_{K};\cG,\cG'))$, cf. \cite{Lan83}, see also
\cite{BPS80} and \cite{Fle81}. Such a family of extensions comes from a global extension on $X_K$ when 
$H^2(K; (p_{K})_*\cH om(\cG,\cG'))=0$ which is the case in our situation. Considering $\P ((\cE xt^1(p_{K};\cG,\cG'))^{\vee})$ we obtain as before a semi-analytic compactum $T$ containing some nontrivial Zariski open subset which parametrizes a universal family of classes of extensions of $\cG$ by $\cG'$. Let $T'$ be the complement of this open subset, $t$ be some point of $T$ and $U$ some Stein neighbourhood of $t$. 
The universal section $\xi$ of $\cE xt^1(p_{T\setminus T'};\cG,\cG')^{\vee\vee}$  multiplied again by some function $gf$ on $U$ extends as a section $gf\xi$ of $\cE xt^1(p_{U};\cG,\cG')$ over $U\setminus T'$ and thus as an element of $\Ext^1(X_U;\cG,\cG')$, since $U$ is Stein and we have an exact sequence
$$\Ext^1(X_U;\cG,\cG')\to H^0(\cE xt^1(p_{U};\cG,\cG'))
\to H^2(U, {p_U}_*\cH om ( \cG,\cG'))$$
induced by the spectral sequence $H^p(\cE xt^q(p_{U};\cG,\cG'))\Rightarrow\Ext^{p+q}(X_U;\cG,\cG')$.

We want to use the elements $gf\xi$ in order to get some coherent sheaf on $X_U$ whose restriction to $X_{U\setminus T'}$
would be precisely the extension defined by $\xi$. For this we recall the construction of this extension. Let 
$0\to \cI_0\to\cI_1\to...$ an injective resolution of $\cG'$ and $\delta:\cI_0\to\cI_1$ the first map. Then one may view $\xi$ as a homomorphism $\cG\to \Im(\delta)$ and the induced extension is
$$0\to \cG'\to \Ker((\delta+\xi):\cI_0\oplus\cG\to\Im(\delta))\to \cG\to 0.$$

Over $U\setminus T'$ we have a commutative diagram with exact rows

$$
  \xymatrix{0\ar[r] & \Ker(\delta+gf\xi)\ar[d]\ar[r] 
& \cI_0\oplus\cG \ar[d]^{\id \oplus gf\id}\ar[r]^{\delta+gf\xi} & \Im(\delta)\ar[d]^{\id}\\
0\ar[r] & \Ker(\delta+\xi)\ar[r] 
& \cI_0\oplus\cG \ar[r]^{\delta+\xi} & \Im(\delta)    }
  $$ 
and we consider the image $\cE_{gf}$ of $\Ker(\delta+gf\xi)$ in $\cI_0\oplus\cG$
 by the vertical arrow $\id \oplus gf\id$. 
It is clear that $\cE_{gf}$ coincides with $\Ker(\delta+\xi)$ away from the vanishing locus of $gf$. Then
$\cE:=\sum_{i,j}\cE_{g_j f_i}$ provides the desired substitute. Here we have considered the functions $f_i$, $g_j$ as before. It is again easy to check that $\cE$ does not depend on the chosen systems of generators.
Although $\cE$ might not sit as middle term of a sheaf extension of $\cG$ by $\cG'$, it  exists globally on $X_U$ and gives sheaf extensions in each fiber of $p_{U\setminus T'}$.  

\item This is clear since $(\cG\otimes_{\cO_X}\cG')\otimes_{\cO_X}\cO_{X_s}\cong \cG_s\otimes_{\cO_{X_s}}\cG'_s$.
\end{enumerate}
\end{proof}

We mention one more boundedeness result in the K\"ahler case which is of independent interest. For this we will need the following lemma.

\begin{Lem}\label{reflexive}
Let $f:X'\to X$ be a desingularization of the normal space $X$ and $\cF$ a reflexive sheaf of rank $1$ on $X$. 
Let $\cF':=(f^*\cF)^{\vee\vee}$. Then an  invertible sheaf $\cL$ on $X'$ has the property $f_*\cL=\cF$ if and only if
there exists an effective divisor $D$ supported on the exceptional locus of $f$ such that $\cL\cong\cF'(D)$. 
\end{Lem}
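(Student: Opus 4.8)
The plan is to reformulate the statement as a single numerical criterion and then treat the two implications separately, the converse being the genuine difficulty. Write $V:=X_{\mathrm{reg}}$, so that $X\setminus V$ has codimension $\ge 2$ by normality, let $j:V\hookrightarrow X$ be the inclusion, and put $V':=f^{-1}(V)$ and $E:=X'\setminus V'$, the exceptional locus, so that $f|_{V'}:V'\to V$ is an isomorphism. Since $X'$ is smooth, $\cF'=(f^*\cF)^{\vee\vee}$ is reflexive of rank $1$, hence invertible, with $\cF'|_{V'}\cong\cF|_V$; and $\cF=j_*(\cF|_V)$ because $\cF$ is reflexive. I would first record two elementary facts. (i) For any divisor $D$ supported on $E$ the sheaf $f_*(\cF'(D))$ is torsion-free and agrees with $\cF$ over $V$, hence embeds into $j_*(\cF|_V)=\cF$; so $f_*(\cF'(D))\subseteq\cF$ always. (ii) If $\cL$ is invertible with $f_*\cL=\cF$, then $\cL|_{V'}\cong\cF'|_{V'}$, so $\cL\otimes\cF'^{\vee}$ is trivial on $V'=X'\setminus E$; as $X'$ is smooth this forces $\cL\cong\cF'(D)$ for a unique divisor $D$ supported on $E$. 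Thus the lemma is equivalent to the assertion $f_*(\cF'(D))=\cF\iff D\ge 0$.

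For the direction $D\ge 0$, the heart is the case $D=0$, i.e. $f_*\cF'=\cF$. The inclusion $f_*\cF'\subseteq\cF$ is (i). For the reverse I would use the natural double-dualization map $f^*\cF\to\cF'$ and its adjoint $\beta:\cF\to f_*\cF'$: a local section $s$ of $\cF$ over $W$ pulls back to a section $f^*s$ of $f^*\cF$ over all of $f^{-1}(W)$, crucially with no poles introduced, and its image in $\cF'$ is a section of $\cF'$ over $f^{-1}(W)$, i.e. an element of $(f_*\cF')(W)$. Over $V$ the map $\beta$ is the identity, so by torsion-freeness $\beta$ is a two-sided inverse of the inclusion and $f_*\cF'=\cF$. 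For general effective $D$ one squeezes: $\cF'\subseteq\cF'(D)$ gives $\cF=f_*\cF'\subseteq f_*(\cF'(D))\subseteq\cF$ by (i), whence equality.

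The converse is the main obstacle, and the point is to produce, at the generic point of each exceptional divisor, a section of $\cF$ whose double-dual pullback does not vanish there. Suppose $f_*(\cF'(D))=\cF$ and, for contradiction, that $D=\sum a_iE_i$ has a coefficient $a_{i_0}<0$. Let $\eta$ be the generic point of $E_{i_0}$, with local ring the discrete valuation ring $R=\cO_{X',\eta}$ and maximal ideal $\mathfrak m_\eta$, and set $y=f(\eta)$. The key computation is that $f^*\cF\to\cF'$ is surjective at $\eta$: over the DVR $R$ the stalk $f^*\cF_\eta$ has generic rank $1$, and for a finitely generated module over a DVR the canonical map to the double dual is surjective with kernel the torsion, so its image is all of $\cF'_\eta\cong R$. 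Hence there is a germ $s\in\cF_y$ whose image $\tilde s\in\cF'_\eta$ generates $\cF'_\eta$, i.e. $\mathrm{ord}_{E_{i_0}}(\tilde s)=0$. Since $a_{i_0}<0$ we have $\cF'(D)_\eta=\mathfrak m_\eta^{-a_{i_0}}\cF'_\eta\subsetneq\cF'_\eta$, so $\tilde s\notin\cF'(D)_\eta$ and thus $s\notin(f_*(\cF'(D)))_y$ although $s\in\cF_y$, contradicting $f_*(\cF'(D))=\cF$. Therefore $D\ge 0$, and together with (ii) this yields the ``only if'' direction. This DVR surjectivity of $f^*\cF\to\cF'$ is what replaces the usual negativity-of-exceptional-divisors arguments; along the way one should also check that pushforward preserves torsion-freeness and that rank-$1$ reflexive sheaves on the smooth $X'$ are invertible, both standard in the normal analytic setting.
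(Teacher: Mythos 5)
Your backward direction ($D\ge 0$ implies $f_*(\cF'(D))=\cF$) is correct and is essentially the paper's argument (natural maps plus uniqueness of the reflexive extension across the codimension-$2$ set $X\setminus V$; your use of torsion-freeness in place of the paper's homothety argument even avoids compactness of $X$). The genuine gap is in your step (ii): you claim that, because $\cL\otimes\cF'^{\vee}$ is trivial on $V'=X'\setminus E$ and $X'$ is smooth, it must be of the form $\cO_{X'}(D)$ with $D$ supported on $E$. Smoothness of $X'$ does not force this in the analytic category. The principle you are invoking --- a line bundle trivial off a divisor is the line bundle of a divisor supported there --- is an algebraic fact, resting on the existence of rational sections (equivalently, the localization sequence for class groups), and it fails analytically. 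Concretely: let $X'$ be an abelian surface, $E\subset X'$ an irreducible ample curve, and $M\in \Pic^0(X')$ nontrivial. Then $U:=X'\setminus E$ is affine, hence Stein, so the exponential sequence and Theorem B give $\Pic(U)\cong H^2(U;\Z)$ via $c_1$; since $c_1(M)=0$, the restriction $M|_U$ is analytically trivial, yet $M\not\cong\cO_{X'}(mE)$ for any $m$, because $c_1(\cO_{X'}(mE))=m[E]\neq 0$ for $m\neq 0$ while $M\neq\cO_{X'}$. So analytic triviality of $\cL\otimes\cF'^{\vee}$ off $E$ alone gives nothing; what makes the conclusion true in the lemma is precisely that $E$ is the exceptional locus of a proper modification onto a normal space, and that must enter the argument.

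The paper's proof of this direction supplies the missing ingredient, and more cheaply than your route: from $f_*\cL=\cF$ one takes the adjunction counit $f^*\cF\cong f^*f_*\cL\to\cL$; since $\cL$ is invertible this map kills torsion and factors through $\cF'=(f^*\cF)^{\vee\vee}$, producing a morphism $\cF'\to\cL$ of invertible sheaves which is an isomorphism off $E$. Such a morphism is a section of $\cL\otimes\cF'^{\vee}$ vanishing only on $E$, so $\cL\cong\cF'(D)$ with $D$ effective and supported on $E$: the divisor exists \emph{and} is effective in one stroke, making your (correct, but now redundant) DVR computation unnecessary. If you insist on your two-step structure, the phrase ``as $X'$ is smooth'' must be replaced by an argument that uses $f$: for instance, push the trivializing section of $\cL\otimes\cF'^{\vee}$ forward to $X\setminus f(E)$, extend it across the codimension-$\ge 2$ set $f(E)$ as a section of the reflexive hull of the (coherent, by properness) direct image, and embed that rank-one sheaf locally into meromorphic functions to conclude that the section extends meromorphically across $E$; only then is there a divisor $D$ at all.
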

\begin{pf}
 Suppose that the   invertible sheaf $\cL$ on $X'$ has the property $f_*\cL=\cF$. Then the map
$f^*\cF\cong f^*f_*\cL\to\cL$ vanishes on torsion and thus factorizes through $\cF'$. It is moreover clear that this map is an isomorphism away from the exceptional locus of $f$.

Conversely, if $D$ is an effective divisor supported on the exceptional locus of $f$, we get natural morphisms
$$\cF\hookrightarrow f_*f^*\cF\to f_*\cF'\hookrightarrow f_*\cF'(D)\hookrightarrow (f_*\cF'(D))^{\vee\vee}\cong\cF$$
the last isomorphism being a consequence of the unicity of the extension of a reflexive sheaf over a codimension $2$ locus; cf. \cite{BS} Prop. 8.3.5.
Since the global endomorphisms of $\cF$ are homotheties, we get our conclusion.
\end{pf}

\begin{Prop}\label{reflexive-limitate}
 Let $X$ be a compact K\"ahler normal space of dimension $d$. 
Then the set of reflexive sheaves of rank one of fixed homology Todd class $\tau_{d-1}$ on $X$ is bounded.
\end{Prop}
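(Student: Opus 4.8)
The plan is to transfer the problem to a resolution, where rank-one reflexive sheaves become line bundles and boundedness is governed by the compactness of $\Pic^0$. Assume $X$ connected, hence irreducible of dimension $d$, and fix a resolution $f:X'\to X$ by a compact K\"ahler manifold which is biholomorphic to $X$ over the smooth locus $X_{\Reg}$ (a modification of a K\"ahler space lies in Fujiki's class $\cC$, which already makes $\Pic^0(X')$ a compact torus). Since $X$ is normal, $\Sing(X)$ has dimension $\le d-2$; let $E_1,\dots,E_m$ be the prime divisors contracted by $f$. For each reflexive sheaf $\cF$ of rank one, Lemma \ref{reflexive} with $D=0$ shows that $\cF':=(f^*\cF)^{\vee\vee}$ is a line bundle on $X'$ with $f_*\cF'=\cF$; conversely, restriction to $X_{\Reg}=X'\setminus\bigcup_k E_k$ identifies the rank-one reflexive sheaves on $X$ with $\Pic(X')/\langle\cO_{X'}(E_k)\rangle$.

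First I would pin down the topological type of $\cF'$ from the datum $\tau_{d-1}(\cF)$. By naturality of $\tau$ applied to $f$ one has $\tau(f_!\cF')=f_*\tau(\cF')$. The higher direct images $R^qf_*\cF'$ for $q\ge 1$ are supported on $\Sing(X)$, hence in dimension $\le d-2$, so they contribute nothing to $\tau$ in degrees $d$ and $d-1$; thus the components of $f_*\tau(\cF')$ in these two degrees coincide with $\tau_d(\cF)=[X]$ and $\tau_{d-1}(\cF)$. Since $X'$ is smooth, $\tau(\cF')=\ch(\cF')\cdot\Td(X')\frown[X']$, whose degree $d-1$ part is $\bigl(c_1(\cF')+\tfrac12 c_1(X')\bigr)\frown[X']$. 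Pushing forward and comparing yields
$$ f_*\bigl(c_1(\cF')\frown[X']\bigr)=\tau_{d-1}(\cF)-\tfrac12\,f_*\bigl(c_1(X')\frown[X']\bigr), $$
whose right-hand side is \emph{independent} of $\cF$. Hence $f_*c_1(\cF')\in H_{2d-2}(X;\Q)$ is the same for all sheaves in our family.

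Next I would convert this homological constraint into boundedness of the parameter space. The group $\mathrm{NS}(X'):=c_1(\Pic(X'))\subset H^2(X';\Z)$ is finitely generated, and by the negativity of contracted divisors the classes $[E_k]$ are linearly independent in $\mathrm{NS}(X')_\Q$ and span $\Ker\bigl(f_*\colon H^2(X';\Q)\to H_{2d-2}(X;\Q)\bigr)$. Consequently $c_1(\cF')$ varies in a single coset of $\langle[E_k]\rangle_\Q$, so its class modulo $\langle[E_k]\rangle$ takes only finitely many values, and $\langle\cO_{X'}(E_k)\rangle$ is a discrete subgroup of $\Pic(X')$. Therefore, in $\Pic(X')/\langle\cO_{X'}(E_k)\rangle$ the sheaves of our family occupy finitely many cosets of the image of $\Pic^0(X')$; as $\Pic^0(X')$ is a compact complex torus, these cosets lie in a compact set $\cC$.

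Finally, boundedness in the sense of Definition \ref{def:bounded} would be obtained by pushing forward a universal family. Because $\langle\cO_{X'}(E_k)\rangle$ is discrete, I can lift $\cC$ to a compact subset $P\subset\Pic(X')$, a finite union of cosets of $\Pic^0(X')$, carrying a Poincar\'e line bundle $\cP$ on $X'\times P$. I then set $\cG:=(f\times\id)_*\cP$ on $X\times P$. Away from a nowhere dense analytic subset of $P$ (handled by generic flatness and descending induction as in Remark \ref{noether} and the proof of Proposition \ref{limitata}), base change gives $\cG_p=f_*\cP_p$, which runs through all members of our family; covering $P$ by finitely many semi-analytic Stein compacta then exhibits the family as bounded. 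The main obstacle is the second step together with the negativity input in the third: one must be sure that the higher direct images really drop out of the top two degrees of $\tau$ (this is exactly where normality, via $\mathrm{codim}\,\Sing X\ge 2$, is used) and that the contracted divisors are numerically independent and exhaust the kernel of $f_*$, since this is what forces the numerically trivial divisor classes on $X$ to form a compact group.
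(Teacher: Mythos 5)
You follow the paper's route almost step for step: resolve $X$, use Lemma \ref{reflexive} to pass from rank-one reflexive sheaves to line bundles on $X'$ modulo exceptional twists, use naturality of $\tau$ together with normality to pin down $c_1(\cF')$ up to exceptional classes, invoke compactness of the components of $\Pic(X')$, and conclude by pushing forward a Poincar\'e bundle with generic base change and noetherian induction. Your computation of $f_*\bigl(c_1(\cF')\frown[X']\bigr)$ via the vanishing of the higher direct images in degrees $d$ and $d-1$ is correct and matches the paper. But one step is justified incorrectly, and it is exactly the point the paper's proof is designed to handle: you need the contracted prime divisors $[E_k]$ to \emph{span} $\Ker\bigl(f_*\colon H^2(X';\Q)\to H_{2d-2}(X;\Q)\bigr)$, and you attribute this to ``the negativity of contracted divisors''. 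Negativity arguments (Grauert--Mumford type) give \emph{linear independence} of the $[E_k]$ --- which, as it happens, your argument can do without --- but they do not give spanning. Spanning is what makes $c_1(\cF')$ move in a single coset of $\langle[E_k]\rangle_\Q$, and it is a purely homological statement: in the exact sequence of the pair $(X',E)$ one has $\Ker\bigl(H_{2d-2}(X';\Q)\to H_{2d-2}(X',E;\Q)\bigr)=\Im\bigl(H_{2d-2}(E;\Q)\to H_{2d-2}(X';\Q)\bigr)$, which is the span of the classes of the $(d-1)$-dimensional components of $E$, while normality ($\dim\Sing X\le d-2$) gives $H_{2d-2}(X;\Q)\cong H_{2d-2}(X,\Sing X;\Q)\cong H_{2d-2}(X',E;\Q)$, identifying that kernel with $\Ker f_*$. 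This diagram of pairs is precisely the displayed diagram in the paper's proof; without it (or an equivalent argument) your third step has no foundation, and you yourself flag it as ``the main obstacle'' without resolving it.

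A second, smaller gap is at the end: it is not true that $\cG_p=f_*\cP_p$ ``runs through all members of our family''. By Lemma \ref{reflexive}, $f_*\cL'=\cF$ holds only when $\cL'\cong\cF'(D)$ with $D$ exceptional and \emph{effective}, whereas your representatives $\cL'=\cF'(-\sum_k m_kE_k)$ involve twists of arbitrary sign; for instance $f_*\cO_{X'}(-E)$ is an ideal sheaf of $\Sing X$, not $\cO_X$. In general $f_*\cP_p$ only agrees with the intended member over $X_{\Reg}$, so the bounding family must consist of the reflexive hulls $(f_*\cP_p)^{\vee\vee}$, constructed in the family via double duals, generic base change for $\cH om$, and noetherian induction --- which is what the paper's final sentences do. On the credit side, your lattice-theoretic step (finitely many values of $c_1(\cF')$ modulo $\langle[E_k]\rangle_\Z$, because $\langle[E_k]\rangle_\Q\cap \mathrm{NS}(X')$ contains $\langle[E_k]\rangle_\Z$ with finite index) is more careful than the paper's corresponding assertion that a single set $S$ of components of $\Pic(X')$, defined by one auxiliary bundle $\cL$, suffices.
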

\begin{pf}
 Let $f:X'\to X$ be a desingularization  $X$ and suppose that the exceptional divisor $E$ of $f$ has $n$ components. Denote by
$\Pic(X'\setminus E,X')$ the group of invertible sheaves on $X'\setminus E$ which may be extended as invertible sheaves to $X'$. 
It is clear that $\Pic(X'\setminus E,X')$ is isomorphic to the group of reflexive sheaves of rank one on $X$. The restriction 
$r:\Pic(X')\to\Pic(X'\setminus E,X')$
induces an exact sequence of groups
$$\Z^n\to\Pic(X')\to\Pic(X'\setminus E,X')\to0.$$
Note however that the natural map $\Pic(X'\setminus E,X')\to \Pic(X')$, $\cF\mapsto\cF'$, defined in Lemma \ref{reflexive}
is not a group morphism in general.

Let $\cL$ be any invertible sheaf on $X'$. The class $\tau_{d-1}(f_*\cL)=f_*\tau_{d-1}(\cL)$ only depends on $r(\cL)$.
In order to see this one may use the following natural diagram in singular homology:
$$
  \xymatrix{ H_{2d-2}(E;\Q) \ar[r] &  H_{2d-2}(X';\Q) \ar[r] \ar[d]^{f_*} & H_{2d-2}(X',E;\Q) \ar[d]_{\cong} ^{f_*}\\
& H_{2d-2}(X;\Q) \ar[r]^{\cong} & H_{2d-2}(X, \Sing(X);\Q)}
$$
or the corresponding diagram in Borel-Moore homology, cf. \cite{Ful} 19.1.
Consider the set of reflexive sheaves of rank one of fixed homology Todd class $\tau_{d-1}=a$ on $X$, let $\cL$ be an invertible sheaf on $X'$ such that $\tau_{d-1}(f_*\cL)=a$ and consider the subset $S$ of elements of $\Pic(X')$ having the same homology Todd class  as $\cL$. 
Then $S$ is a finite union of components of $\Pic(X')$ and any reflexive sheaf of rank one of  homology Todd class $\tau_{d-1}=a$ on $X$ is of the form
$(f_*(\cL'))^{\vee\vee}$ for some $\cL'$ whose isomorphism class lies in $S$. Denote the Poincar\'e invertible sheaf on $X'\times S$ by $\cP$. For each $s\in S$ the natural morphism $((f_S)_*\cP)_s\to (f_{S,s})_*(\cP_s)$ is an isomorphism over the regular part of $X$. Moreover for $s\in S$ general, that is away from a bad subset $S_1$, we have 
$(\cH om_{\cO_{X'\times S}}(\cP,\cO_{X'\times S}))_s\cong \cH om_{\cO_{X'_s}}(\cP_s,\cO_{X'_s})$  and the desired boundedness follows in the usual way by inductively constructing families of sheaves, first over $X\times S$, then over $X\times S_1$ and so on.
\end{pf}

%%%%%%%%%%%%%%%%%%%%%%%%%%%%%%%%%%%%%%%%%%%%%%%%%%%%%%%%%%%%%%%%%%%%%%%%%%%%%%%%%%%%%%%%%%%%%%%%%%%%%%%%%%%%%

\section{A boundedness criterion}\label{main}

Our main result is the following boundedness criterion.

\begin{Th}\label{principala}
Let $(X,\omega)$ be a K\"ahler complex manifold and $F$  a set of isomorphism classes of coherent sheaves on  $X$. 
 Then the set $F$ is bounded if the following conditions are fulfilled:
\begin{enumerate}
\item There exists a bounded set $G$ of classes of coherent sheaves on  $X$
 such that each element of $F$ is a quotient of an element of $G$.
 \item The supports of the sheaves of $F$ are contained in some compact subset of $X$. 
\item The degrees of the sheaves of $F$ are bounded from above. 
\end{enumerate}
\end{Th}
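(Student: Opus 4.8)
The plan is to treat the two implications separately, the necessity being essentially formal and the sufficiency carrying all the content.

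For the necessity, suppose $F$ is bounded. Condition (1) is then immediate: one may simply take $G=F$, since every sheaf is a quotient of itself via the identity. For condition (2), realize $F$ as a set of fibres of a coherent sheaf $\cG$ over $X\times K$, with $K$ a (compact) semi-analytic Stein compactum, as in Definition \ref{def:bounded}. I would stratify $K$ by means of Remark \ref{noether} into finitely many locally closed analytic pieces over which $\cG$ is flat, the finiteness being guaranteed by the noetherianity of the Zariski topology on $K$. After desingularizing the connected components of the strata, Proposition \ref{plat} shows that $\tau(\cF_k)$ is constant along each piece. Hence $\tau(\cF_k)$, and a fortiori every degree $\deg_r(\cF_k)=[\omega^r]\frown\tau_r(\cF_k)$, takes only finitely many values as $k$ runs through $K$, which is far more than (2) requires.

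For the sufficiency I would follow the strategy announced in the introduction: reduce the boundedness of $F$ to a volume estimate for an associated family of analytic cycles. After stratifying as above and invoking Proposition \ref{limitata}, I may assume $G$ is given by a single sheaf $\cG$, flat over a smooth connected Stein compactum $K$, and that each $\cF\in F$ is a quotient $q\colon\cG_k\twoheadrightarrow\cF$; replacing $\cG$ by a locally free sheaf mapping onto it, I assume $\cG$ locally free of rank $N$. Stratifying $F$ by the generic rank $\rho$ of $\cF$, a quotient $q$ determines, over the dense open set where $\cF$ is locally free of rank $\rho$, a section of the relative Grassmann bundle $\pi\colon\mathrm{Gr}_\rho(\cG)\to X\times K$ of rank-$\rho$ quotients; let $\Gamma_q$ be the closure of its image, an effective cycle of dimension $\dim X$ in the compact Kähler manifold $\mathrm{Gr}_\rho(\cG_k)$ (these manifolds forming a family over $K$). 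The heart of the argument is to bound $\vol(\Gamma_q)$ uniformly. Working cohomologically with a Kähler class $[\Omega]=\pi^*[\omega]+\xi$, where $\xi$ is the relative Plücker class, the volume $\int_{\Gamma_q}\Omega^{\dim X}$ expands into a sum of intersection numbers of the form $[\omega^a]\frown\pi_*(\xi^b\frown[\Gamma_q])$, each nonnegative since $\Gamma_q$ is a genuine subvariety and $\omega,\xi$ are positive. Since the tautological sub- and quotient bundles restricted to $\Gamma_q$ recover $\cK:=\Ker q$ and $\cF$, these numbers can be rewritten, via additivity of $\tau$ and the relation $\tau(\cK)=\tau(\cG)-\tau(\cF)$, in terms of the fixed degrees of $\cG$ and the degrees of $\cF$; using the effectivity of the leading component of the homology Todd class (property (6) of \S\ref{sectionchern}) and the upper bounds of hypothesis (2), I would extract a uniform upper bound for each term, hence for $\vol(\Gamma_q)$. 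Bishop's compactness theorem (\cite{Lie77}, \cite{Fuj78}) then confines the cycles $\Gamma_q$ to a relatively compact subset of the cycle space. Transporting this compactness back through the relative Douady space of quotients of $\cG$, and restricting its universal quotient sheaf to a finite cover by semi-analytic Stein compacta of the resulting compact parameter set, produces exactly the data of Definition \ref{def:bounded}, so $F$ is bounded.

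The main obstacle is precisely the volume estimate, and the difficulty is the absence of linear projections noted in the introduction: one cannot reduce to curves or points by generic projection, so the whole cycle $\Gamma_q$ must be controlled simultaneously. Concretely, each $\deg_r(\cF)$ is a single number while the volume sees the full intersection behaviour of $\Gamma_q$, so forcing the one-parameter bounds of (2) to dominate every term of the expansion demands careful use of the positivity and effectivity properties of $\tau$. In parallel one must account for the torsion of $\cF$ and for the lower-dimensional strata, which contribute cycles of smaller dimension that have to be bounded on their own; getting all signs and all strata to cooperate is the delicate point of the proof.
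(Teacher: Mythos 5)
Your necessity argument is correct and matches the paper's (which dismisses that direction in a sentence). The sufficiency direction, however, has genuine gaps, both located exactly where you yourself place ``the main obstacle.'' The first is the reduction ``replacing $\cG$ by a locally free sheaf mapping onto it'': on a compact K\"ahler manifold that is not projective there is no theorem guaranteeing that a coherent sheaf is a quotient of a locally free sheaf (this resolution property is a projective, or low-dimensional, phenomenon), so this step is unavailable. The paper never makes it; local freeness is achieved only by pulling $\cG$ back to a Hironaka flattening $\hat W$ of the support family and dividing by torsion, after which all degree comparisons must be carried out on the modification --- this is why the exceptional divisor and $c_1(\hat W_s)$ enter the computation in Lemma \ref{tehnica}.

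The second and more serious gap is the volume estimate itself. Your graph $\Gamma_q$ has codimension equal to the fibre dimension of the Grassmann bundle, so $\vol(\Gamma_q)$ expands into terms $\int_{\Gamma_q}\pi^*\omega^{a}\wedge\xi^{b}$ with $b$ up to $\dim\Supp\cF$; for $b\ge 2$ these are intersection numbers of powers of the Pl\"ucker class (equivalently, higher Segre/Chern data of the kernel and quotient), which are \emph{not} among, and not bounded by, the degrees $\deg_r(\cF)=[\omega^r]\frown\tau_r(\cF)$ that hypothesis (2) controls. Your appeal to positivity and effectivity of $\tau$ cannot close this: effectivity (property (6) of Section \ref{sectionchern}) holds only for the component of $\tau$ in degree $\dim\Supp\cF$, lower components carry no sign, and indeed the lower bound on $\deg_{d-1}$ (Lemma \ref{tehnica}(2)) is itself a nontrivial output of the paper's volume computation rather than a formal consequence of effectivity. (Also, $\xi$ is only positive along the fibres, so even the nonnegativity of the individual terms requires first adding a multiple of $\pi^*\omega$.) The paper's key idea, absent from your outline, is to force the unknown class to enter the volume \emph{linearly}: it replaces the kernel $\cE$ by the line bundle $\cL=(\bigwedge^{\rank\cE}\hat\cE^{sat})^{\vee\vee}$, so that the relevant cycle is a divisor $\hat D$ of class $[\eta_{\hat\P}]-\hat q^*c_1(\cL)$ in a projectivized bundle, and pairs it with the special form $q^*\omega^{d-1}\wedge\eta_{\P}^{n-1}+q^*\omega^{d}\wedge\eta_{\P}^{n-2}$; the only unknown term is then $[\omega^{d-1}]\, c_1(\cL)$, i.e.\ precisely a $(d-1)$-degree, which (2) bounds. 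Two further points you defer are in fact most of the paper's proof: since the graph sees $\cF$ only up to torsion in lower dimension, one needs the induction on $\dim\Supp\cF$ with the devissage by powers of the ideal of the support and by irreducible components (the sheaves $\cF_{j,i}$, $(\cF'_j)_{(r)}$) to reduce to pure sheaves on irreducible supports, together with the reconstruction of the sheaves from the universal divisor in Lemma \ref{tehnica}(3); and ``transporting compactness back through the relative Douady space of quotients of $\cG$'' is circular, because the compactness of that Douady space is Corollary \ref{compacitatea}, which is deduced from this very theorem.
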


In general we shall call a set $F$ of isomorphism classes of coherent sheaves on the fibers of $X\to S$ {\em dominated} if it satisfies the first condition of the criterion.

%By our discussion on Chern classes and since a semi-analytic Stein compactum has only finitely many connected components,
%it is clear that the two conditions are necessary. 
%
%For the converse we will need some preparations.

Recall that  $\cF$ is called {\em pure of dimension} $d$ if $\cF$ as well as all its non-trivial coherent subsheaves are of dimension $d$. 
We introduce the following notation. As in Section 2 we denote by $\cN_r(\cF)$  the sheaf of sections of a coherent sheaf $\cF$ whose support have dimension less than $r$ and $\cF_{(r)}:=\cF/\cN_r$. If $\cF$ is of dimension $d$ then
$\cF_{(d)}$ is pure of dimension $d$.

In the proof we will repeatedly use the following strengthening of Bishop's theorem:

\begin{Th}\label{Lieberman}(\cite{Lie77})
A subset $C$ of the cycle space $C(X)$ of a complex space $X$ is relatively compact in $C(X)$ if and only if the cycles parameterized by $C$ all lie in a compact subset of $X$ and their volumes are uniformly bounded. 
\end{Th}

The proof of Theorem \ref{principala} will consist in the reduction to the following Lemma.

\begin{Lem}\label{tehnica}
 Let $d$ be a non-negative integer, 
$(X,\omega)$ as above and $S$ a reduced analytic space. %, $A$ a semi-analytic Stein compactum of $S$. 
Let further 
$W\subset X\times S$ be a reduced  analytic subset of $ X\times S$ such that the second projection $W\to S$ is proper, equidimensional of relative dimension $d$ with irreducible general fibers  and
 $F$ a dominated set of classes of pure $d$-dimensional sheaves  $\cF$ on the 
irreducible
fibers  of $W$ over some relatively compact subset of $S$. %$A$.
Then:
\begin{enumerate}
\item The set of classes $\tau_{d}(\cF)$ is finite.
%\marginpar{\tiny prima asertiune se poate reformula ca ``The degrees $\deg_{d}(\cF)$ are bounded.''}
\item The degrees $\deg_{d-1}(\cF)$ are bounded from below.
\item If the degrees $\deg_{d-1}(\cF)$ are bounded from above, then the set $F$ is bounded.
\end{enumerate}
\end{Lem}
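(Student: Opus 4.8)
The plan is to exploit the defining exact sequence attached to a dominated sheaf. For each $\cF\in F$ I choose $\cG_k$ in the bounding family $G$ and write $0\to\cK\to\cG_k\to\cF\to0$. Since $G$ is bounded, Proposition~\ref{plat} together with the finitely many connected components of the parametrising Stein compactum shows that the homology Todd classes $\tau(\cG_k)$, and in particular the generic ranks $r_{\cG_k}$ along the fibres, take only finitely many values; purity of $\cF$ forces $\cK$ to be saturated in $\cG_k$. I use additivity of $\tau$ (property~(4)) and the two positivity statements of Section~\ref{sectionchern}: that on an irreducible base $\tau$ in top degree equals $(\mathrm{rank})\cdot[W_s]$ (property~(5)), and that the top-degree component of $\tau$ of any coherent sheaf is an effective cycle class (property~(6)). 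Part (1) is then immediate: since $W_s$ is irreducible of dimension $d$ and $\cF$ is pure of dimension $d$, property~(5) gives $\tau_d(\cF)=r_\cF[W_s]=(r_\cF/r_{\cG_k})\,\tau_d(\cG_k)$, and the bounded integers $r_\cF\le r_{\cG_k}$ together with the finitely many values of $\tau_d(\cG_k)$ force $\tau_d(\cF)$ into a finite set.

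For part (2), additivity gives $\deg_{d-1}(\cF)=\deg_{d-1}(\cG_k)-\deg_{d-1}(\cK)$, the first term ranging over a finite set, so it suffices to bound $\deg_{d-1}(\cK)$ from above. Passing to determinants, $\det\cK$ is generically a rank-one subsheaf of the fixed reflexive sheaf $(\wedge^{\rho}\cG_k)^{\vee\vee}$, and replacing any sheaf by its reflexive hull only raises $\deg_{d-1}$, since the defect is supported in dimension $\le d-1$ and its class is effective (property~(6)), hence paired non-negatively with $[\omega]^{d-1}$. I would thus reduce (2) to bounding $\deg_{d-1}$ from above over rank-one saturated subsheaves of a fixed sheaf. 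This is the point where, projectively, one restricts to a general complete-intersection curve; as linear projections and ample sections are unavailable, I instead read the bound off the effective $(d-1)$-cycle carried by the defect of the inclusion and its $[\omega]^{d-1}$-volume.

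For part (3), granting in addition the upper bound, (1) and (2) pin $\deg_d(\cF)$ to a finite set and $\deg_{d-1}(\cF)$ to a bounded interval; the remaining task is the K\"ahler analogue of Grothendieck's finiteness of Hilbert polynomials, namely to propagate these two bounds to the whole class $\tau(\cF)$. Once $\tau(\cF)$ is confined to a finite set, the quotients lie in finitely many connected components of the relative Douady (Quot) space, each compact by Fujiki's theorem \cite{Fuj84} (via Bishop, \cite{Lie77},\cite{Fuj78}), which yields boundedness. I would argue by induction on $d$. The determinant $\det\cF$ is a rank-one reflexive sheaf whose $\deg_{d-1}$ is now bounded on both sides; combined with effectivity (property~(6)) and the fact that effective cycles of bounded $[\omega]^{d-1}$-volume form finitely many homology classes (Bishop), this places $\tau_{d-1}(\det\cF)$ in a finite set, so by Proposition~\ref{reflexive-limitate} the determinants form a bounded family. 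Splitting off the lower-dimensional defect $\cF^{\vee\vee}/\cF$ (of dimension $\le d-1$), applying the inductive hypothesis in dimension $d-1$, and reassembling by means of the stability of boundedness under kernels, quotients and extensions (Proposition~\ref{limitata}), then bounds the remaining degrees $\deg_r(\cF)$ as well. The base case $d=0$ is immediate, a $0$-dimensional quotient being determined up to a bounded family by its length, which is finite by (1).

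The main obstacle, absorbing most of the work, is the higher-rank reassembly in (3) --- recovering $\cF$ from its determinant and its lower-dimensional defect --- and, underlying both (2) and (3), the replacement of hyperplane restriction by effective-cycle positivity and Bishop compactness. One must also carry the whole argument uniformly over the compact parameter $K$, using generic flatness and a descending induction on $\dim S$ as in Remark~\ref{noether}, so that the local constructions glue to genuine coherent sheaves in the sense of Definition~\ref{def:bounded}.
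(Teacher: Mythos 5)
Your part (1) is essentially the paper's own argument and is fine; the genuine gaps are in parts (2) and (3), which carry all the difficulty. In (2) you correctly reduce to bounding $\deg_{d-1}$ of the saturated kernel $\cK\subset\cG_s$ (equivalently, of a rank-one saturated subsheaf of a fixed sheaf) from above, but the device you offer in place of hyperplane restriction --- ``read the bound off the effective $(d-1)$-cycle carried by the defect of the inclusion'' --- does not exist. For a rank-one subsheaf $\cL$ of a higher-rank sheaf $\cM$, the quotient $\cM/\cL$ is not a torsion sheaf and carries no $(d-1)$-cycle; property (6) of Section \ref{sectionchern} makes only the component of $\tau$ in degree $\dim\Supp$ effective, so $\tau_{d-1}(\cM/\cL)$ has no positivity and the identity $\deg_{d-1}(\cM)=\deg_{d-1}(\cL)+\deg_{d-1}(\cM/\cL)$ merely restates the problem. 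The paper's substitute is a genuinely new construction, absent from your proposal: after an embedded desingularization of $W_s$ and a flattening which makes $\hat\cG_s$ locally free, the determinant $\cL=(\bigwedge^{\rank\cE}\hat\cE^{sat})^{\vee\vee}$ is invertible, the inclusion $\cL\hookrightarrow\bigwedge^{\rank\cE}\hat\cG_s$ becomes a section of $\cO_{\hat\P}(1)\otimes\hat q^*\cL^{\vee}$ on the projectivized bundle $\hat\P$, vanishing on an irreducible divisor $\hat D$, and the non-negativity of the volume of its image $D\subset\P$ against the positive form $\Omega=q^*\omega^{d-1}\wedge\eta_{\P}^{n-1}+q^*\omega^{d}\wedge\eta_{\P}^{n-2}$ yields $0\le\vol(D)=C-\deg_{d-1}((p_s)_*\hat\cE)\le C-\deg_{d-1}(\cE)$ with $C$ independent of $\cF$. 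This inequality is what proves (2), and nothing in your write-up produces it.

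In (3) your route is circular and also misuses the available results. Fujiki's theorem gives compactness of the \emph{irreducible} components of the Douady space; fixing the finitely many values of $\tau(\cF)$ does cut out a union of connected components, but nothing bounds the \emph{number} of irreducible (or connected) components met by your quotients --- that finiteness is exactly the boundedness to be proved, and the compactness of $D(\cG)_{\leq b}$ and of connected components is Corollary \ref{compacitatea}, deduced from this lemma, so it cannot be an input. Furthermore, Proposition \ref{reflexive-limitate} does not apply to $\det\cF$ on the fibers $W_s$, which need not be normal, and your inductive appeal to ``dimension $d-1$'' invokes the lemma in a situation its hypotheses do not cover. What the paper does instead is to exploit the upper bound on $\vol(D)$ obtained in (2): by Bishop's theorem the divisors $D$ form a part $T$ of the relative cycle space of $\P(\cG)$ proper over $S$; the universal divisor realizes every $\cL$ as $\hat q_{t*}(\cO_{\P(\hat\cG_t)}(1)\otimes\cI_t)$, i.e.\ as a fiber of a single family, which bounds the determinants $\det(\hat\cF)$; then $\hat\cF$ is recovered as the image of $\hat\cG_s\to\cH om(\bigwedge^{r-1}\hat\cG_s,\det(\hat\cF))$, bounded by Proposition \ref{limitata}; and finally $\cF$ itself is recovered from $\hat\cF$ by pushing forward to $W_s$ and using purity. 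These reconstruction steps, which constitute the proof of (3), are missing from your proposal.
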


\begin{proof}
By Hironaka's desingularization and flattening theorems we may assume that $S$ is smooth and irreducible containing a semi-analytic Stein compactum $A$,  $W$ is flat over $S$ and that there exists a coherent sheaf $\cG$ on $W$ flat over $S$ such that each sheaf $\cF$ in $F$ is a quotient of some $\cG_s$, $s\in A$. For instance in order to obtain  $W$ flat over $S$ we consider the locus $S_1\subset S$ of non-flatness of $W\to S$, flatten $W\to S$ to obtain $W'\to S'$, deal with $W_{S_1}\to S_1$ separately by noetherian induction and consider the resulting space over the disjoint union of $S'$ and $S'_1$. In the same way we may replace $\cG$ by $\cG_{(\dim W)}$ since the sheaves $\cF$ are pure.
Moreover the arguments of \cite{HL} 1.1.6-14 adapt to our case to show that for general $s\in S$ the fibers $\cG_s$ are pure as well. Hence as before we may suppose that each $\cG_s$ we are concerned with is pure.

For the first assertion consider the exact sequence 
$$0\to\cE\to\cG_s\to\cF\to0$$
exhibiting $\cF$ as a quotient of $\cG_s$. Then $\tau_{i}(\cG_s)=\tau_{i}(\cF )+\tau_{i}(\cE)$ 
and for  $i=d$ both $\tau_{i}(\cF )$ and $\tau_{i}(\cE)$ are positive (or zero).
In fact these classes correspond to integer combinations of $r$-dimensional irreducible components of $W_s$. 
Now the classes $ \tau_{i}(\cG_s)$ are independent of $s\in S$
%range through a finite set 
by Proposition \ref{plat}.
%\marginpar{\tiny atentie la reformularea lui \ref{plat} }
Since the $d$-degrees of the sheaves $\cF$ are bounded from above by the $d$-degrees of the $\cG_s$, Bishop's theorem implies that the set of classes $\tau_{d}(\cF)$ must be finite.

%%%%%%%%%%%%%%%%%%%%%%%%%%%%%%%%%%%%%%%%%%%%%%%%%%%%%%%%%%%%%%%%%%%%%%%%%%%%%%%%%
For the next assertions we will need some further reductions: 
take an embedded desingularization $\tilde W\to W$ of $W\subset X\times S$ and a flattening $\hat W\to \tilde W$ of $\cG_{\tilde W}$ over $\tilde W$. Thus $\hat{\cG}:=\cG_{\hat W}/\Tors \cG_{\hat W}$ is locally free on $\hat W$.
Denote by $p:\hat W\to W$ the projection.
Notice that the topological invariants of the general fiber of $\hat W\to S$ are independent of the sheaves $\cF$.
The general fibers of $\hat W\to S$ are smooth and connected. Take now a sheaf $\cF$ on some $W_s$ 
%such that $\cG$ is flat over a neighbourhood of $s$ in $S$ and $\cG_s$ is pure 
and suppose that the corresponding $\hat W_s$ is smooth and connected. 
The projection $p_s:\hat W_s\to W_s$ factors through the normalization $g:W_s'\to W_s$ as 
$$
  \xymatrix{\hat W_s \ar[r]^{f} & W_s'\ar[r]^{g} & W_s. }
$$ 
Let $\cE$ be the kernel of $\cG_s\to \cF$, $\hat{\cE}$ the image of the composition 
$p_s^*\cE\to p_s^*\cG_s\to\hat{\cG_s}$ and $\cC$ the cokernel of $\cE\to (p_s)_*\hat{\cE}$. 
We have exact sequences of $\cO_{W_s}$-modules:
$$
0\to\cE\to\cG_s\to\cF\to0,
$$
$$
0\to\cE\to (p_s)_*\hat{\cE}\to\cC\to0
$$
hence $(p_s)_*(\tau_{d-1}(\hat{\cE}))=\tau_{d-1}((p_s)_*\hat{\cE})=\tau_{d-1}(\cE)+\tau_{d-1}(\cC)$.
Notice that $g^*\cF$ is pure but $p_s^*\cF$ might have torsion and its support is contained in the exceptional divisor $E$ of $f$. Let then $\hat\cE^{sat}$ be the saturation of $\hat\cE$ in $\hat\cG_s$, i.e. the kernel of the composition of morphisms
$\hat\cG_s\to\hat\cG_s/\hat\cE\to (\hat\cG_s/\hat\cE)/\Tors(\hat\cG_s/\hat\cE)$.
Thus the sequence
$$
0\to\hat\cE^{sat}\to\hat\cG_s\to p_s^*\cF/\Tors(p_s^*\cF)\to0
$$
is exact. 
Set $\hat\cF:=p_s^*\cF/\Tors(p_s^*\cF)$ and $\cL:=(\bigwedge^{\rank\cE}\hat\cE^{sat})^{\vee\vee}$. It is an invertible subsheaf of $\bigwedge^{\rank\cE}\hat\cG_s$ whose first Chern class $[c_1(\cL)]$ equals that of $\hat\cE^{sat}$. 

Let now $\hat\P:=\P(\bigwedge^{\rank\cE}\hat\cG_s)$ be the projectivized bundle of $\bigwedge^{\rank\cE}\hat\cG_s$ and $\P$ the irreducible component of $\P(\bigwedge^{\rank\cE}\cG_s)$ which covers $W_s$. We have a commutative diagram of natural morphisms
$$
 \xymatrix{\hat\P \ar[r]^{\hat p_s} \ar[d]^{\hat q} & \P \ar[d]^{q} \\
\hat W_s \ar[r]^{p_s} & W_s. }
$$ 
The inclusion $\cL\hookrightarrow\bigwedge^{\rank\cE}\hat\cG_s$ gives a section $\phi\in H^0(X; \bigwedge^{\rank\cE}\hat\cG_s\otimes\cL^{\vee})\cong H^0(\hat\P; \cO_{\hat \P}(1)\otimes \hat q^*\cL^{\vee})$ which vanishes on some irreducible divisor $\hat D$ on $\hat \P$. 
In fact $\hat D$ corresponds to the canonical embedding of $\P(\bigwedge^{\rank\cE}\hat \cG_s/\cL)$ in $\hat \P$. Denote by $D$ the image of $\hat D$ in $\P$.

Set $n:=\rank (\bigwedge^{\rank\cE} \cG_s)$ and let  $\omega$ be the restriction of the
K\"ahler form to $W_s$. One may choose Chern forms $\eta_{\P}$, $\eta_{\hat\P}$  of the tautological line bundles $\cO_{\P}(1)$ and $\cO_{\hat\P}(1)$ respectively which are positive on the fibers of $q$ and $\hat q$ respectively, \cite{Bi83} Lemma 4.19. Notice that $(p_s^*\omega)^{d-1}$ vanishes on the exceptional divisor $E$. Replacing $\eta_{\P}$, $\eta_{\hat\P}$ by some small multiples of theirs if necessary we obtain a semi-positive $(d+n-2, d+n-2)$-form  $\Omega:=q^*\omega^{d-1}\wedge  \eta_{\P}^{n-1}+q^*\omega^{d}\wedge \eta_{\P}^{n-2}$ on $\P$.  
 It is easily seen that $\Omega $ is strictly positive over the regular locus of  $(\bigwedge^{\rank\cE} \cG_s)^{\vee\vee}$ on $W_s$. 
 Put $C':=\int_{\hat\P}\hat q^*\phi\wedge \eta_{\hat\P}^{n-1}$, where $\phi$ is a $2d$-differential form representing the fundamental class on $\hat W_s$.
Then for the  volume of $D$ with respect to  $\Omega$ we obtain:

$$
\vol (D)= [\Omega]\frown (\hat p_s)_*\hat D=
(\hat p_s)_*(\hat p_s^*([\Omega])\frown \hat D)
=\hat p_s^*([\Omega])([\eta_{\hat\P}]-\hat q^*[c_1( \cL)])=
$$
$$
= (\hat p_s^*[\Omega])[\eta_{\hat\P}]-C'(p_s^*([\omega^{d-1}])[c_1( \cL)]=
(\hat p_s^*[\Omega])[\eta_{\hat\P}]-C'(p_s^*([\omega^{d-1}])[c_1(\hat \cE^{sat})])=
$$
$$
=(\hat p_s^*[\Omega])[\eta_{\hat\P}]+\frac{C'}{2}(p_s^*([\omega^{d-1}])c_1(\hat W_s)-C'(p_s^*([\omega^{d-1}])\frown \tau_{d-1}(\hat\cE^{sat})=
$$
$$
=C-C'[\omega^{d-1}]\frown \tau_{d-1}((p_s)_*(\hat \cE))=   
C-C'\deg_{d-1}((p_s)_*(\hat \cE))
\leq C-C'\deg_{d-1}(\cE),
$$
where $C$ and $C'$ are constants not depending on $\cF$ with $C'>0$, cf. \cite{Tel08} 2.1 for a similar computation in the smooth case. 
This proves the second assertion.

The same formula shows that an upper bound on $\deg_{d-1}(\cF)$ leads to an upper bound on $\vol (D)$. If we knew that $\Omega$ was strictly positive, then  Bishop's theorem  would imply that the analytic cycles $D$ may be seen as points in a part $T$ proper over $S$ of the relative cycle space of $\P((\bigwedge^{\rank\cE} \cG))$ over $S$.  This would allow to deduce a sequence of  boundedness results for the divisors $\hat{D}$, then for the sheaves $\cL$, $\hat{\cF}$ and eventually for $\cF$.

%\marginpar{\small de reluat paragraful urmator dupa ce e gata demonstratia}
As it stands, we will still be able to bound the class of $\hat D$ modulo divisors supported over the exceptional  locus of $f:\hat{W_s}\to W'_s$. 
We will show that the lower bound on $\deg_{d-1}(\cE)$ allows for only a finite number of values of  $\tau_{d-1}(f_*\cL)$. Thus by Proposition \ref{reflexive-limitate} the sheaves of the type $(f_*\cL)^{\vee\vee}$
will belong to a bounded family, which in turn may be used to bound the set of divisors $\hat{D}$.
We consider for simplicity the case of a fiber $W_s$ but it will be clear that the arguments may be adapted to families. 

Note first that by the projection formula it follows that the class of $g^*\omega$ on $W_s'$ satisfies the hypothesis of the  Demailly-P\u aun  criterion, \cite{DePa04} Main Theorem 0.1.
In particular  $\omega':=g^*\omega+dd^c\phi$ is a K\"ahler form on $W_s'$ for some suitable smooth function $\phi$ on $W_s'$.

%Remark also that the strict positivity of $\Omega$ is also verified when $d=\dim W_s=1$. We shall therefore assume in the sequel that $d\geq 2$.

We have denoted by $E$ the exceptional divisor on $\hat W_s$ which is contracted by the map $f:\hat{W_s}\to W'_s$. Let $E_{\hat\P}:=\hat{q}^{-1}(E)$ be its pull-back to $\hat{\P}$.
We may compute the volume of the hypersurface $\hat D$ with respect to the form $\hat\Omega:= \hat q^*f^*(\omega')^{d-1}\wedge  \eta_{\hat\P}^{n-1}+\hat q^*f^*(\omega')^{d}\wedge \eta_{\hat\P}^{n-2}$ which is strictly positive on $\hat\P\setminus E_{\hat\P}$.  As before we get
$$\vol(\hat D)\leq C-C'\deg_{d-1}(\cE).$$
We  deduce from this inequality  that the set of cohomology classes in $H^2(\hat\P\setminus E_{\hat\P})$ of integration currents along hypersurfaces of type $\hat D\setminus E_{\hat\P}$ is uniformly bounded. 

These classes are the Chern classes of the restrictions of the line bundles  $\cO_{\hat\P}(1)\otimes \hat q^*\cL^{\vee}$ to $\hat\P\setminus E_{\hat\P}$. They lie in a compact set in $H^2(\hat \P\setminus E_{\hat\P}; \R)$ and consequently the following commutative diagrams

$$ 
\xymatrix{ & H^2(\hat \P;\Q) \ar[d]_{\cong} \ar[r] &  H^2(\hat \P\setminus E_{\hat\P}; \Q)\ar[d]_{\cong}\\
H_{2(d+n-2)}(E_{\hat\P};\Q) \ar[r] & H_{2(d+n-2)}(\hat \P;\Q) \ar[r]  & H^{BM}_{2(d+n-2)}(\hat \P\setminus E_{\hat\P};\Q) }
$$
and
$$
\xymatrix{ 
 & H^2(\hat \P;\Q)  \ar[r] &  H^2(\hat \P\setminus E_{\hat\P}; \Q) \\
& H^2(\hat W_s;\Q) \ar@{^{(}->}[u] \ar[d]_{\cong} \ar[r] &  H^2(\hat W_s\setminus E; \Q) \ar@{^{(}->}[u] \ar[d]_{\cong} \\
H_{2(d-1)}(E;\Q) \ar[r] & 
  H_{2d-2}(\hat W_s;\Q) \ar[r] \ar[d]^{f_*} & H^{BM}_{2d-2}(\hat W_s\setminus E;\Q) \ar[d]_{\cong} ^{f_*} \\
& H_{2d-2}(W'_s;\Q) \ar[r]^{\cong} & H^{BM}_{2d-2}(W'_s\setminus f(E);\Q)
}
$$
show that 
 $\tau_{d-1}(f_*(\cL))=\tau_{d-1}((f_*(\cL))^{\vee\vee}) $ attains only a finite number of values in $H_{2d-2}(W'_s;\Q)$. By Proposition \ref{reflexive-limitate} the sheaves of the type $(f_*\cL)^{\vee\vee}$
will thus belong to a bounded family. Hence  the set of  sheaves  
$(f^*(f_*\cL)^{\vee\vee})^{\vee\vee}$ on $\hat W_s$ will also be bounded. 
All morphisms of the following commutative diagram of coherent $\cO_{\hat W_s}$-modules are injective

$$
\xymatrix{ \cL  \ar[d] &  (f^*f_*\cL)^{\vee\vee} \ar[d]\ar[r]\ar[l] & (f^*(f_*\cL)^{\vee\vee})^{\vee\vee}  \ar[d] \\
\bigwedge^{\rank\cE} \hat\cG_s & (f^*f_*\bigwedge^{\rank\cE} \hat\cG_s)^{\vee\vee} \ar[r]\ar[l] &
 (f^*(f_*   \bigwedge^{\rank\cE} \hat\cG_s    )^{\vee\vee})^{\vee\vee}  .
}
$$
Now the sheaves $(f^*(f_*\cL)^{\vee\vee})^{\vee\vee} \cap \bigwedge^{\rank\cE} \hat\cG_s $ lie in a bounded set and coincide with the sheaves $\cL$ on $\hat W_s\setminus E$. Since the latter are 
 saturated in $\bigwedge^{\rank\cE} \hat\cG_s$ we get inclusions 
 $((f^*(f_*\cL)^{\vee\vee})^{\vee\vee} \cap \bigwedge^{\rank\cE} \hat\cG_s)^{\vee\vee} \subset \cL\subset \bigwedge^{\rank\cE} \hat\cG_s$. 
 Now the degree of $\cL$ with respect to some fixed metric on $\hat W_s$ is upper bounded by the slope of the maximal destabilizing subsheaf of $ \bigwedge^{\rank\cE} \hat\cG_s$, which only depends on $\cG_s$. We thus obtain  the boundedness of the set of invertible sheaves $\cL$. This is equivalent to the boundedness of the sheaves of the form $\det \hat\cF$.

%%%%%%%%%%%%%%%%%%%%%%%%%%%%%%%%%%%%%%%%%%%%%%%%%%%%%%%%%%%%%%%%%%%

Let $r$ be the rank of $\cF$. One checks directly then that
 $\hat\cF$ is the image of the composition of the natural maps
$$
\hat\cG_s \to \cH om(\bigwedge^{r-1} \hat\cG_s, \bigwedge^{r}\hat\cG_s) \to \cH om(\bigwedge^{r-1} \hat\cG_s, \det(\hat\cF)).
$$
Since all terms above sit in bounded sets we get the boundedness of the sheaves $\hat\cF$ by Proposition \ref{limitata}. Let $\tilde \cF$ be a family containing these sheaves. We suppose that the family $\tilde \cF$ is over $S$ for simplicity of notation.
The composition
$$
\cG_s\to (p_*p^*\cG)_s\to (p_s)_*((p^*\cG)_s) \cong (p_s)_*(p_s)^*\cG_s \to (p_s)_* (\tilde \cF_s) \cong ( p_s)_*(p_s^*\cF/\Tors(p_s^*\cF))
$$
factorizes as
$$
\cG_s\to (p_*p^*\cG)_s\to (p_*\tilde \cF)_s  \to (p_s)_* (\tilde \cF_s) $$ 
and factorizes also through 
the surjection $\cG_s\to \cF$. Since $\cF$  and  also $(p_*\tilde \cF)_s$ for general $s$ are pure, the image of this composition is isomorphic to both $\cF$ and $(p_*\tilde \cF)_s$.

The boundedness of the set $F$ now follows.
\end{proof}
%%%%%%%%%%%%%%%%%%%%%%%%%%%%%%%%%%%%%%%%%%%%%%%%%%%%%%%%%%%%%%%%%%%%%%%%%%%%%%%%%%%%%%%%%%%%%%%%%%%%%%%%%%%%%%%%%%%%%%%%
We will also need the following consequence of Theorem \ref{Lieberman}.

\begin{Lem}\label{chow}
 Let  $X$ be a K\"ahler manifold, $r$ an integer and 
$F$ a set of compact reduced  subspaces of  $X$ of  bounded degree  and all of whose components are of dimension $r$ and contained in a fixed compact subset of $X$. Then $F$ is bounded.
\end{Lem}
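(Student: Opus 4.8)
The plan is to reduce the statement to the compactness of a bounded-volume part of the cycle space of $X$, furnished by Bishop's theorem. First I would translate the hypothesis on degrees into a bound on volumes. If $Z\subset X$ is a reduced subspace all of whose irreducible components $Z_1,\dots,Z_k$ have dimension $r$, then by the properties of the homology Todd class collected in Section \ref{sectionchern} (in particular item (5) applied on each component) one gets $\tau_r(\cO_Z)=\sum_j[Z_j]=[Z]$, the fundamental cycle of $Z$ with all multiplicities equal to one. Hence
$$
\deg_r(\cO_Z)=[\omega^r]\frown\tau_r(\cO_Z)=[\omega^r]\frown[Z]=r!\,\vol(Z),
$$
so that ``$F$ has bounded degree'' is exactly ``the associated multiplicity-free cycles have bounded volume''.

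Next, by Bishop's compactness theorem in the form of \cite{Lie77}, \cite{Fuj78}, the set of $r$-dimensional analytic cycles of $X$ of volume at most a given constant is contained in a compact subset $B$ of the cycle space $\cC_r(X)$. To each $Z\in F$ I attach its reduced cycle $[Z]=\sum_j[Z_j]$, which by the previous paragraph lies in $B$. Covering $B$ by finitely many semi-analytic Stein compacta over the point $S=\{\mathrm{pt}\}$ (boundedness only asks for containment, so passing to finite unions is harmless), I may work with one such compactum $K$. Over $K$ I dispose of the restriction of the universal family of cycles; I would let $W\subset X\times K$ be the reduced analytic subset underlying the support of the universal cycle and set $\cG:=\cO_W$. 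By construction the support of the fibre $W_s$ is the support of the cycle parametrized by $s$, so for $s=[Z]$ one has $(W_s)_{\mathrm{red}}=Z$.

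The remaining, and main, difficulty is that $\cG_s=\cO_{W,s}$ need not be reduced, so $\cG$ does not literally realize every $\cO_Z$ as a fibre. To handle this I would argue by noetherian induction on $\dim K$, which is legitimate since the Zariski topology of a semi-analytic Stein compactum is noetherian (\cite{BS} 5.3). After shrinking and stratifying $K$ so that $W\to K$ becomes flat with equidimensional fibres of dimension $r$ (Remark \ref{noether}), the generic fibre of a reduced family over a reduced base is reduced; thus over a Zariski-dense open $K^0\subset K$ the fibre $W_s$ is reduced and pure of dimension $r$, whence $\cG_s\cong\cO_Z$ for the corresponding $Z$. Every $Z\in F$ whose parameter lies in $K^0$ is therefore realized by $\cG$. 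For the classes whose parameter lies in the proper closed subset $K\setminus K^0$, I restrict the universal cycle to $(K\setminus K^0)_{\mathrm{red}}$, form again the reduced support and its structure sheaf, and repeat; since the assignment $[Z]\mapsto Z$ is intrinsic, the reduced fibre over $[Z]$ is still $Z$. The induction terminates by noetherianity, and assembling the finitely many sheaves so produced over the corresponding compacta yields precisely the data required by Definition \ref{def:bounded}. This proves that $F$ is bounded.
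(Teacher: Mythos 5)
Your proof is correct and follows essentially the same route as the paper's: Bishop--Fujiki compactness of the bounded-volume part of the cycle space, the reduced incidence family of the universal cycle, generic reducedness of its fibres over the parameter space, and noetherian induction (in the Zariski topology) to handle the non-generic locus. The only real difference is cosmetic: the paper additionally invokes Barlet's generic multiplicities $n_j$ of the components of the incidence set and restricts to those with $n_j=1$, a refinement your argument can bypass because the fibre of the reduced incidence family over any point parametrizing a multiplicity-free cycle already has the correct support, and all points outside the good open set are swept into the induction anyway.
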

\begin{pf}
The part $S$ of the Barlet space parameterizing the analytic cycles of dimension $r$ on $X$  of degree bounded by some constant  $M$ and contained in some fixed compact subset of $X$ is compact 
by \ref{Lieberman}. Moreover by \cite{Bar75} Thm. 1, p. 38 the set $W:=\{(x,s)\in X\times S \ | \ x\in |Z_s|\}$ is analytic and closed in $X\times S $, see also \cite{Fuj78}. Here $|Z_s|$ denotes the support of the cycle corresponding to $s\in S$. Furthermore, there exist positive integers
$n_j$ associated to the irreducible components $W_j$ of $W$ and a dense Zariski open subset $V\subset S $ such that for each $s\in V$ the multiplicities of the irreducible components of $Z_s$ contained in $W_j$ are precisely $n_j$. 

Consider the (reduced) subspace $W'\subset W$ consisting of those components $W_j$ for which $n_j=1$. 
The general fibers of $W'\to  S $ are reduced and cover part of our set $F$. We deal with the rest by noetherian induction on $S$.
\end{pf}

%%%%%%%%%%%%%%%%%%%%%%%%%%%%%%%%%%%%%%%%%%%

We can now prove  Theorem \ref{principala}.
\begin{pf}
Let $F$ be a set of isomorphism classes of sheaves $\cF$ dominated by a bounded set $G$ on $X$ and such that their supports are contained in a fixed compact subset of $X$. We suppose that for all $n\in\Z$ the
 degrees $\deg_{n}(\cF)$ are bounded from above.

We will show that the set $F$ is bounded 
working by induction on $r:=\max_{[\cF]\in F}\dim \Supp\cF$. 

For $r<0$ there is nothing to prove. 

Suppose that $r\geq 0$ and that the  statement holds for all sets $F'$ as above and with $\max_{[\cF]\in F'}\dim \Supp\cF<r$.

By Lemma \ref{chow} the set of the $r$-dimensional parts of the supports $\Supp\cF$ hence also the set of their corresponding ideal sheaves $\cI$ are bounded. The sets of ideals of type $\cI^j$ are equally bounded by Proposition \ref{limitata}. Using the boundedness assumption
on the degrees of $\cF$  and the devissage $\cI^j\cF \subset \cI^{j-1}\cF\subset ...\subset \cI\cF\subset\cF$ we find  the existence of  some integer $k$ such that $\dim \Supp(\cI^k\cF)<r$ for all $\cF$ in $F$. 
Let $\cF_j:=(\cI^{j-1}\cF)/(\cI^j\cF)$. The sets of sheaves $\cF_j$ as well as $\cI^k\cF$ are dominated and
$$\deg_d \cF=\sum_{j=1}^k\deg_d\cF_j +\deg_d (\cI^k\cF)$$
for all $d\in \Z$.

Let 
$Y_1, ..., Y_n$ denote the irreducible  components of $\Supp(\cO_X/\cI)$ and for  $1\le j\le k$ and $1\le i\le n$ set 
$$\cF_{j,i}:=\sum_{1\le l\le n, \ l\neq i}\cI_{Y_l}\cF_j$$ 
and $\cF'_j:=\Im(\oplus_i\cF_{j,i}\to\cF_j)$. Then the sets of sheaves $\cF'_j$ and $\cF_j/\cF'_j$ are dominated, $\dim\Supp(\cF_j/\cF'_j)<r$ and
 $$
\deg_d\cF=\sum_{j=1}^k\deg_d\cF'_j +\sum_{j=1}^k\deg_d(\cF_j/\cF'_j) +\deg_d (\cI^k\cF)
$$
 for all $d\in \Z$.

By applying the functor $-_{(r)}$ to the sequence of maps  $\oplus_i\cF_{j,i}\to\cF'_j\to \cF_j$ one sees that
$(\cF'_j)_{(r)}\cong\oplus_i(\cF_{j,i})_{(r)}$, hence the sheaves $(\cF'_j)_{(r)}$ are direct sums of pure sheaves of dimension $r$ each supported on some component $Y_i$ of  $\Supp(\cO_X/\cI)$. Moreover the formula 

\begin{equation}\label{suma}
\deg_d\cF=\sum_{j,i}\deg_d(\cF_{j,i})_{(r)} +\sum_{j=1}^k\deg_d\cN_r(\cF'_j)+\sum_{j=1}^k\deg_d(\cF_j/\cF'_j) +\deg_d (\cI^k\cF)
\end{equation}
holds for all $d\in \Z$.
 
We will next use this formula for $d=r-1$ and apply Lemma \ref{tehnica} to show that the sheaves $(\cF_{j,i})_{(r)}$ form a bounded set. Lemma \ref{chow} gives us an analytic family of supports $W\subset X\times S$ parametrized by some compact analytic space $S$ and we may view the sheaves $(\cF_{j,i})_{(r)}$ as sheaves on the fibers of $W\to S$. We may also suppose $S$ and $W$ reduced and irreducible. 
The first two assertions of Lemma \ref{tehnica} show that the $(r-1)$-degrees of the sheaves $(\cF_{j,i})_{(r)}$ are bounded from below. Using this and the upper bound on $\deg_{r-1}\cF$ in formula (\ref{suma}) imply that they are also bounded from above.
Then the third assertion of Lemma \ref{tehnica} says that after some base change $K\to S$ to a semi-analytic Stein compactum $K$ the sheaves  $(\cF_{j,i})_{(r)}$ occur as fibers of a family $\cH$ over $W_K$. Using  generic flatness we see that our sheaves are also fibers of some family over $X\times K'$ for some possibly new semi-analytic Stein compactum $K'$ and our claim is proven.

 Let  $G_j$ be a bounded  set of sheaves $\cG_j$ dominating the set of sheaves $\cF'_j$. Then
 the kernels $\cK_j$ of the surjections $\cG_j\to (\cF'_j)_{(r)}$ also form a bounded family, for $\cG_j\in G_j$. From the diagram
$$
  \xymatrix{0\ar[r] & \cK_j\ar[r]\ar[d] & \cG_j \ar[r] \ar[d]&  (\cF'_j)_{(r)}\ar[r] \ar[d]^{\id} & 0\\
0\ar[r] & \cN_r(\cF'_j)\ar[r] & \cF'_j\ar[r] &  (\cF'_j)_{(r)}\ar[r] & 0    }
  $$ 
we see that the set of sheaves $\cN_r(\cF'_j)$ is dominated as well.

The $2k+1$ sheaves of dimension $<r$ appearing in formula (\ref{suma}) may be each considered on a different copy of $X$ thus giving a sheaf on the disjoint union $X^{(2k+1)}$ of these copies. These sheaves on $X^{(2k+1)}$  have bounded degrees, are dominated and have their supports contained in a compact subset of $X^{(2k+1)}$, hence they form a bounded set by the induction hypothesis.
 But then each of the sets of sheaves $\cN_r(\cF'_j)$, $\cF_j/\cF'_j$, $\cI^k\cF$ is bounded on $X$.
Now the exact sequences 
$$0\to\cN_r(\cF'_j)\to\cF'_j\to(\cF'_j)_{(r)}\to 0,$$
$$0\to\cF'_j\to\cF_j\to \cF_j/\cF'_j\to 0,$$
$$ 0\to\cI^j\cF\to\cI^{j-1}\cF\to\cF_j\to 0$$
allow us to reconstruct $\cF$ and deduce the desired boundedness.
\end{pf}

%%%%%%%%%%%%%%%%%%%%%%%%%%%%%%%%%%%%%%%%%%%%%%%%%%%%%%%%%%%
%%%%%%%%%%%%%%%%%%%%%%%%%%%%%%%%%%%%%%%%%%%%%%%%%%%%%%%%%%%%%%%%%%%%%%%%%%%%%%%%%%%%%%%%%%%%%%%%%%%%%%%%%%
\section{Corollaries}\label{sectioncor}

We start with some  direct consequences of Theorem \ref{principala}.

\begin{Cor}\label{cazul compact}
Let $(X,\omega)$ be a K\"ahler compact complex manifold and $F$  a set of isomorphism classes of coherent sheaves on  $X$. 
 Then the set $F$ is bounded if and only if the following two conditions are fulfilled:
\begin{enumerate}
\item The set $F$ is dominated.
\item The degrees of the sheaves of $F$ are bounded from above. 
\end{enumerate}
\end{Cor}

\begin{pf}
 By our discussion on Chern classes and since a semi-analytic Stein compactum has only finitely many connected components,
it is clear that the two conditions are necessary. Their sufficiency follows from our Theorem \ref{principala}.
\end{pf}

\begin{Cor}\label{cp}
 Let $(X,\omega)$ be a K\"ahler compact complex manifold and $F$ a dominated set of isomorphism classes of coherent sheaves on  $X$. 
 Then the following assertions are equivalent:
\begin{enumerate}
\item The set $F$ is bounded.
\item The set of Chern classes of the sheaves of $F$ is finite.
\item The set of homology Todd classes of the sheaves of $F$ is finite.
\item The degrees of the sheaves of $F$ are bounded. 
\item The degrees of the sheaves of $F$ are bounded from above. 
\end{enumerate}
\end{Cor}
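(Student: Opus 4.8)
The plan is to deduce the corollary from Theorem \ref{principala} together with the elementary properties of the homology Todd class recalled in Section \ref{sectionchern}. I would prove the cycle $(1)\Rightarrow(3)\Rightarrow(4)\Rightarrow(5)\Rightarrow(1)$ and, separately, the equivalence $(2)\iff(3)$. The only implication carrying genuine content is $(5)\Rightarrow(1)$: since $F$ is assumed to be dominated, condition (1) of Theorem \ref{principala} holds automatically, so by that theorem an upper bound on all the degrees $\deg_n(\cF)$ is precisely what is needed for boundedness. Hence $(5)\Rightarrow(1)$ is immediate from Theorem \ref{principala}, and the remaining implications are formal consequences of the behaviour of $\tau$ and of the definition of boundedness.

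For the purely homological implications I would argue as follows. Since $X$ is smooth (working on each connected component separately), the first listed property of the homology Todd class gives $\tau(\cF)=\mathrm{PD}(\ch(\cF)\cdot\Td(X))$, where $\mathrm{PD}$ denotes Poincar\'e duality and $\Td(X)$ is a fixed class whose degree-zero component is $1$, hence invertible in $H^{2*}(X;\Q)$. Consequently the assignment $\ch(\cF)\mapsto\tau(\cF)$ is a bijection between the Chern characters occurring in $F$ and the homology Todd classes occurring in $F$; as the Chern character and the Chern classes determine one another over $\Q$, the sets in $(2)$ and $(3)$ are finite simultaneously, which is $(2)\iff(3)$. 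For $(3)\Rightarrow(4)$ I note that by definition $\deg_r(\cF)=[\omega^r]\frown\tau_r(\cF)$ depends only on $\tau(\cF)$, so finiteness of the set of homology Todd classes forces each $\deg_r$ to take only finitely many values, whence all degrees are bounded, i.e. $(4)$ holds. Finally $(4)\Rightarrow(5)$ is trivial.

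It remains to establish $(1)\Rightarrow(3)$, and this is where the only real care is needed. If $F$ is bounded there is a semi-analytic Stein compactum $K$ and a coherent sheaf $\cG$ in a neighbourhood of $X\times K$ realizing every class of $F$ as some fibre $\cG_k$, $k\in K$. The homology Todd class is constant in a flat family over a connected manifold by Proposition \ref{plat}, but $K$ is in general neither smooth nor connected, nor need $\cG$ be flat over it, so the obstacle is to reduce to the situation of Proposition \ref{plat}. I would do this by noetherian induction: using Remark \ref{noether} (generic flatness together with Frisch's and Hironaka's theorems) and desingularization, stratify $K$ into finitely many smooth connected locally closed pieces over each of which $\cG$ becomes flat. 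On each stratum $\tau(\cG_k)$ is constant by Proposition \ref{plat}; as there are finitely many strata, $\tau(\cG_k)$ takes only finitely many values as $k$ ranges over $K$. Since every element of $F$ equals some $\cG_k$, the set of homology Todd classes of the sheaves of $F$ is finite, which is $(3)$ and closes the cycle.
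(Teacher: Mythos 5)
Your proposal is correct and follows exactly the route the paper intends: the paper gives no written proof, calling the corollary a ``direct consequence'' of Theorem \ref{principala}, with the necessity of the numerical conditions justified earlier by the remark that a semi-analytic Stein compactum has finitely many connected components together with the Section \ref{sectionchern} properties of $\tau$. Your cycle $(1)\Rightarrow(3)\Rightarrow(4)\Rightarrow(5)\Rightarrow(1)$ with $(2)\iff(3)$ via invertibility of $\Td(X)$, and the flat stratification plus Proposition \ref{plat} for $(1)\Rightarrow(3)$, is precisely that intended argument, written out in more detail.
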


We come to our main application: the compactness of the connected components of the Douady space of a compact K\"ahler manifold.

\begin{Cor}\label{compacitatea}
Let $X$ be a compact K\"ahler manifold, $\cG$ a coherent sheaf on $X$ and $b$ some real number. Then the Douady space $D(\cG)_{\leq b}$ of quotients of $\cG$ with degrees bounded from above by $b$ is compact. In particular the Douady space $D(\cG)_{\alpha}$ of quotients of $\cG$ with fixed homology Todd class equal to $\alpha\in H_*(X,\Q)$ is compact, especially the connected components of the whole Douady space $D(\cG)$ are compact.
\end{Cor}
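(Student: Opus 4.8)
The plan is to combine the boundedness criterion (Theorem~\ref{principala}) with the invariance of the homology Todd class in flat families (Proposition~\ref{plat}) and the universal property of the Douady space. Relative to Fujiki's compactness of the \emph{irreducible} components, the genuinely new point is the passage to \emph{connected} components; so the heart of the matter is to manufacture a single compact parameter space realizing every quotient parametrized by $D(\cG)_{\leq b}$.

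First I would observe that $D(\cG)_{\leq b}$ is open and closed in $D(\cG)$. Over $X\times D(\cG)$ the pullback of $\cG$ carries a universal quotient $\mathcal{Q}$, flat over $D(\cG)$, and by Proposition~\ref{plat} --- applied to the restriction of $\mathcal{Q}$ to desingularizations of irreducible curves joining points of a connected component --- the class $\tau(\mathcal{Q}_d)$, hence every degree $\deg_r(\mathcal{Q}_d)$, is locally constant in $d\in D(\cG)$. Consequently the condition that all degrees be at most $b$ cuts out a union of connected components; in particular $D(\cG)_{\leq b}$ is closed, and it suffices to show that it is relatively compact in $D(\cG)$.

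Next I would feed this into the boundedness criterion. The set $F$ of isomorphy classes of quotients occurring in $D(\cG)_{\leq b}$ is dominated, since each of its members is a quotient of $\cG$ and the singleton $\{\cG\}$ is trivially bounded, and its degrees are bounded from above by $b$. Theorem~\ref{principala} therefore produces a semi-analytic Stein compactum $K$ and a coherent sheaf $\cG'$ on a neighbourhood of $X\times K$ such that every $\cF\in F$ is isomorphic to some fibre $\cG'_k$. After the reductions used repeatedly above --- Remark~\ref{noether} together with noetherian induction on the Zariski topology of $K$, exactly as in the proof of Proposition~\ref{limitata} --- I may assume $\cG'$ flat over $K$ and that $(p_K)_*\cH om(\cG_K,\cG')$ is locally free and commutes with base change, where $\cG_K$ denotes the pullback of $\cG$ to $X\times K$.

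Finally I would parametrize the surjections themselves. The projectivization $Z:=\P(((p_K)_*\cH om(\cG_K,\cG'))^{\vee})$ is proper over $K$ and carries a universal homomorphism $\cG_Z\to\cG'_Z$; over the locus where this homomorphism is surjective with cokernel flat over $Z$ --- again reached by shrinking and noetherian induction --- the cokernel is a flat family of quotients of $\cG$, whence the universal property of the Douady space furnishes a holomorphic map $Z\to D(\cG)$ whose image contains $D(\cG)_{\leq b}$. Since $K$ is compact and $Z$ is proper over $K$ with projective fibres, $Z$ is compact, so its image is a compact subset of $D(\cG)$; as $D(\cG)_{\leq b}$ is closed and contained in this image, it is compact. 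I expect this last step to be the main obstacle: organizing the relative $\cH om$, the flatness and base-change reductions, and the map into the Douady space so that the parameter space may truly be taken compact is precisely where the linear projections of the projective case have no analogue and where the Stein-compactum formalism must carry the weight. The two remaining assertions are then immediate. A fixed homology Todd class $\alpha$ fixes all degrees, so $D(\cG)_{\alpha}\subseteq D(\cG)_{\leq b}$ for a suitable $b$ and is closed, hence compact. And by Proposition~\ref{plat} every connected component of $D(\cG)$ carries a constant Todd class, so it is contained in some $D(\cG)_{\alpha}$ as an open and closed, therefore compact, subset.
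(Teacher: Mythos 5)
Your overall strategy is the paper's strategy: invoke Theorem~\ref{principala} to get a bounded family $\cG'$ over a Stein compactum $K$ containing every quotient sheaf, parametrize the homomorphisms $\cG\to\cG'_k$ by a space proper over $K$, and map into $D(\cG)$ by its universal property; moreover your opening observation that $D(\cG)_{\leq b}$ is a union of connected components, hence closed, is a point the paper leaves implicit and is indeed needed at the end. But the final step, which you yourself single out as the crux, contains a genuine gap. You restrict the universal homomorphism $\cG_Z\to\cG'_Z$ to ``the locus where this homomorphism is surjective,'' obtain a flat family of quotients there, and then conclude that since $Z$ is compact its image in $D(\cG)$ is compact. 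The classifying map into $D(\cG)$, however, is only defined on that surjectivity locus, which is Zariski \emph{open} in $Z$: its complement is the (proper) projection to $Z$ of $\Supp\,\Coker(\cG_Z\to\cG'_Z)$, a closed analytic set. An open subset of a compact space is in general not compact, and the image of a non-compact set under a holomorphic map need not be compact; nor does ``shrinking and noetherian induction'' repair this, because recursing over the closed complement (which parametrizes non-surjective morphisms) never enlarges the domain of your classifying map. (There is also a small slip: where the morphism is surjective its cokernel is zero; the flat family of quotients there is $\cG'_Z$ itself, i.e.\ the image, not the cokernel.)

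The paper's way around this is precisely \emph{not} to restrict to the surjectivity locus: it takes $\cF'$ to be the \emph{image} sheaf of the universal morphism over the whole compact parameter set $K'$. The image is a globally defined coherent quotient of $\cG_{K'}$ regardless of whether the morphism is fiberwise surjective; after Hironaka flattening and noetherian induction one may assume $\cF'$ flat over a full neighbourhood $U$ of $K'$, and then \emph{every} fiber $\cF'_s$, $s\in K'$, is a quotient of $\cG$ (restriction of a surjection $\cG_U\to\cF'$ to a fiber stays surjective), so the classifying morphism $U\to D(\cG)$ is defined on all of $U\supseteq K'$. The image of the compact set $K'$ is then compact; it contains $D(\cG)_{\leq b}$, since every quotient in $D(\cG)_{\leq b}$ arises from a surjective morphism $\cG\to\cG'_k$, for which the image coincides with the target; and your closedness observation then finishes the proof, because a closed subset of $D(\cG)$ contained in a compact image is compact. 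So the missing idea is exactly: replace ``restrict to where the universal morphism is surjective'' by ``take the image of the universal morphism and flatten it over the base'' --- this is what keeps the parameter space compact while still covering all of $D(\cG)_{\leq b}$.
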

\begin{pf}
By our boundedness criterion, the sheaves which are quotients of $\cG$ and whose degrees are bounded from above by $b$ are fibers over a semi-analytic Stein compactum $K$ of some family $\cF$ over $X\times S$ with $S$ smooth. Here we may suppose that $K$ is contained in $S$. By noetherian induction we may suppose that $\cF$ is flat over $S$. 

To any complex space $T$ over $S$ we associate the set
$\Hom_{X\times T}(\cG_T,\cF_T)$. 
This defines a contravariant functor which is represented by a linear space $V$ over $S$, cf. \cite{Fle81} 3.2. 
It is clear that we may find a finite number of semi-analytic Stein compacta in $V$ covering $K$ and such that  up to some multiplicative constant each non-zero morphism in  $\Hom_X(\cG,\cF_s)$, $s\in K$, is represented by some element in this union; see for instance the construction of the projective variety over $S$ associated to $V$ in \cite{Fi} 1.9. 
Let $K'$ be the union of these semi-analytic Stein compacta and $\cF'$ the image of the universal morphism restricted to $K'$. By flattening and noetherian induction again we may assume that $\cF'$ is flat over a neighbourhood $U$ of $K'$ and that each quotient of $\cG$ whose class is in $D(\cG)_{\leq b}$ is represented by some morphism $\cG\to\cF'_s$ for some $s\in K'$. The universal property of the whole Douady space  $D(\cG)$   of quotients of $\cG$ now gives us a  morphism $U\to D(\cG)$ whose restriction to $K'$ covers $D(\cG)_{\leq b}$. Thus 
$D(\cG)_{\leq b}$ is compact.
\end{pf}

Let now $X$ be a complex manifold, not necessarily compact, $d$ be a non-negative integer and $D_d(X)$ the Douady space of purely $d$-dimensional compact complex subspaces of $X$. Recall that $D_d(X)$ is a closed subspace of the whole Douady space $D(X):=D(\cO_X)$. The following corollary about the properness of the morphism ``Douady $\to$ Barlet'' (\cite{Bar75}, \cite{Mag07}) which was suggested to us by Daniel Barlet is now a direct consequence of our  boundedness criterion combined with Proposition \ref{plat} and Theorem \ref{Lieberman}.

\begin{Cor}\label{Dou-Ba}
If $X$ is a K\"ahler manifold and $D'_d$ the reduction of a connected component of the Douady space $D_d(X)$ of purely $d$-dimensional compact complex subspaces of $X$, then the natural morphism from $D'_d$ to Barlet's space $C_d(X)$ of $d$-dimensional cycles of $X$ is proper.
\end{Cor}

We finally present an application to the moduli space of torsion free rank one sheaves on a compact K\"ahler manifold.
Let $X$ be a connected compact complex manifold. It is immediately seen that any torsion free rank one sheaf on $X$ is simple. Moduli spaces for simple coherent sheaves on compact complex spaces were constructed in  \cite{KoOk89}. The torsion free rank one sheaves are thus parameterized by an open subset $\cM_1(X)$ of this space, cf. \cite{BaLeP87}. Since the parameterized objects have rank one, 
this subset is also separated, \cite[Proposition 6.6]{KoOk89}.  By fixing the total Chern class $c(\cF)=c$ of the sheaves to be parameterized we obtain a union $\cM_{1, c}(X)$ of connected components of $\cM_1(X)$.

\begin{Cor}\label{moduli}
If $X$ is a connected, compact, K\"ahler manifold and $c\in H^*(X,\Z)$ is a fixed total Chern class, then $\cM_{1, c}(X)$ is compact.
\end{Cor}
\begin{pf}
 If $\cF$ is a torsion free rank one sheaf on $X$ with $c(\cF)=c$ we have exact sequences
$$0\to\cF\to\cF^{\vee\vee}\to\cF^{\vee\vee}/\cF\to 0,$$
$$0\to \cF\otimes\cF^{\vee}\to\cO\to(\cF^{\vee\vee}/\cF)\otimes\cF^{\vee}\to0$$
and the Chern classes of $\cF^{\vee\vee}$ and of $(\cF^{\vee\vee}/\cF)\otimes\cF^{\vee}$ are completely determined by $c$.

Let $\Pic^{c_1}(X)$ and $\cP\to X\times \Pic^{c_1}(X)$ be the corresponding component of the Picard group of $X$ and the restriction of the Poincar\'e bundle. Let further $D_{c'}(X)$ be the Douady space of analytic subspaces of $X$ with total Chern class equal to $c':=c((\cF^{\vee\vee}/\cF)\otimes\cF^{\vee})$ and $\cD\subset X\times D_{c'}(X)$ the associated universal subspace.
To $\cD$ corresponds a family of ideal sheaves $\cI_{\cD}\subset \cO_{X\times D_{c'}(X)}$ which is flat over $D_{c'}(X)$. Then 
$\cI_{\cD}\boxtimes\cP$ is a family of torsion free sheaves of rank one on $X$ flat over $D_{c'}(X)\times \Pic^{c_1}(X)$. By the universal property of $\cM_{1, c}(X)$ one gets a morphism $D_{c'}(X)\times \Pic^{c_1}(X)\to\cM_{1, c}(X)$ which is also bijective. The conclusion follows now from our Corollary \ref{compacitatea}
 and the compactness of $\Pic^{c_1}(X)$.
\end{pf}

%%%%%%%%%%%%%%%%%%%%%%%%%%%%%%%%%%%%%%%%%%%%%%%%%%%%%%%%%%%%%%%%%%%%%%%

 \hrule \medskip
\par\noindent

Address:\\
%\par\noindent{\it Matei Toma}: \\
Institut \'Elie Cartan, UMR 7502, 
Universit\'e de Lorraine, CNRS, INRIA,
Boulevard des Aiguillettes, B.P. 70239, 54506 Vandoeuvre-l\`es-Nancy Cedex, France \\ 
{\tt Matei.Toma@univ-lorraine.fr}

\end{document}